\documentclass{amsart}
\usepackage{color}
\usepackage{graphicx,epsf}
\usepackage{amsmath,amscd}
\newtheorem{theorem}{Theorem}[section]
\newtheorem{lemma}[theorem]{Lemma}
\newtheorem{corollary}[theorem]{Corollary}

\theoremstyle{definition}
\newtheorem{definition}[theorem]{Definition}
\newtheorem{example}[theorem]{Example}

\theoremstyle{remark}
\newtheorem{remark}[theorem]{Remark}
\numberwithin{equation}{section}



\vfuzz2pt 

\hfuzz2pt 


\newcommand{\Real}{{\mathbb R}}

\newcommand{\eps}{\varepsilon}

\newcommand {\hide}[1]{}

\begin{document}
\title[Lecture Notes on QE]{Lecture Notes on Complexity of Quantifier Elimination
over the Reals}

\author{Nicolai Vorobjov}
\address{
Department of Computer Science, University of Bath, Bath
BA2 7AY, England, UK}
\email{masnnv@bath.ac.uk}
\maketitle

\section*{Introduction}

These are lecture notes for a course I gave in mid-1990s for MSc students at the University of Bath.
It presents an algorithm with singly exponential complexity for the existential theory of the reals,
in the spirit of J. Renegar \cite{R}.
Some ideas are borrowed from \cite{GV} and \cite{G}.
The aim was to convey the main underlying ideas, so many of the proofs and finer details of algorithms
are either missing or just sketched.
Full proofs and algorithms can be found in the aforementioned papers or, in a somewhat different form,
in the fundamental monograph \cite{BPR}.

I changed nothing in the original notes except adding references, bibliography, and correcting obvious typos.

\section{Semialgebraic sets}

\subsection{Objectives of the course}
\medskip

We are going to study algorithms for solving systems of polynomial equations
and inequalities.
The emphasis is on the {\it complexity}, that is, on the running time of algorithms.
Elementary procedures, like Gr\"obner basis or cylindrical decomposition,
have high complexity.
Effective algorithms require more advanced mathematical tools.

Computational problems we will be dealing with go beyond just solving systems.
The most general problem we are going to consider is {\em quantifier elimination
in the first order theory of the reals}.

\subsection{Formulas and semialgebraic sets}

Let $\Real$ denote the field of all real numbers.

Suppose that we are interested in deciding whether or not a system of inequalities
$$f_1 (X_1, \ldots , X_n) \ge 0 , \ldots , f_k (X_1, \ldots , X_n) \ge 0$$
is consistent, that is, has a solution in $\Real^n$.
Here $f_i \in \Real[X_1, \ldots , X_n],\> 1 \le i \le k$ are polynomials
with real coefficients in variables $X_1, \ldots , X_n$.

We can express the statement that the system is consistent by writing
\begin{equation}\label{eq:system}
\exists X_1 \exists X_2 \cdots \exists X_n \left(
f_1 (X_1, \ldots , X_n) \ge 0 , \ldots , f_k (X_1, \ldots , X_n) \ge 0 \right).
\end{equation}

This statement is either true (if there is a real solution to the system) or false.

We can generalize formula (\ref{eq:system}) in two natural ways.
Firstly, we can consider formulae in which not all the variables are bound by quantifier $\exists$.
\medskip

\begin{example}
\begin{equation}\label{eq:ex1}
\exists X_{n_1} \exists X_{n_1+1} \cdots \exists X_n \bigl(
f_1 (X_1, \ldots , X_n) \ge 0 , \ldots , f_k (X_1, \ldots , X_n) \ge 0 \bigr)
\end{equation}
for a certain $n_1,\> 1 < n_1 \le n$.

In (\ref{eq:ex1}) variables
$$X_{n_1}, \ldots , X_n$$
are called {\em bound}, while the remaining variables
$$X_1, \ldots , X_{n_1-1}$$
are {\em free}.
\end{example}

Secondly, we can consider formulae with both types of quantifiers,
$\forall$ as well as $\exists$.

\begin{example}
A formula with the prefix
$$\forall X_{n_1} \forall X_{n_1+1} \exists X_{n_1+2} \forall X_{n_1+3} \ldots$$
has free variables $X_1, \ldots , X_{n_1-1}$ and both types of quantifiers.
Formulae of such kind are called {\em formulae of the first order theory of
$\Real$ (or of the reals)}.
\end{example}

Formulae of the first order theory of the reals can be used to express a great
variety of statements, properties, problems about real numbers, and
therefore about the geometry of $\Real^n$.

Formulae without free variables express statements (true or false), while
the ones with free variables describe {\em properties}.

A set of points $(x_1, \ldots , x_{n_1-1}) \in \Real^{n_1-1}$, which
after substituting their coordinates instead of free variables in
a formula give a true statement, is called {\em semialgebraic}.

\subsection{Tarski-Seidenberg Theorem}
The most important fact about real formulae and semialgebraic sets is
the following
\medskip

\begin{theorem}[Tarski-Seidenberg]
Any semialgebraic set can be described as a union of sets, each of which is a set of all
solutions of a system of polynomial inequalities.
\end{theorem}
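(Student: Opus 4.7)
The plan is to prove this as a quantifier elimination result: show that any first-order formula is equivalent to a quantifier-free one, whose disjunctive normal form then exhibits its solution set as a finite union of basic systems. I would proceed in three layers. First, convert $\Phi$ to prenex normal form; its quantifier-free matrix, being a Boolean combination of polynomial (in)equalities, already has the desired shape after DNF expansion. Second, eliminate the quantifier prefix one quantifier at a time, from innermost outward. Using $\forall X\, \Phi \equiv \neg\exists X\,(\neg\Phi)$, together with closure of Boolean combinations of polynomial inequalities under negation (via $\neg(f \ge 0) \equiv (-f > 0)$) and De Morgan's laws, it suffices to eliminate a single existential quantifier.

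This reduces the theorem to the \emph{projection step}: given a quantifier-free formula $\phi(X_1,\ldots,X_{n-1},Y)$ built from polynomials $g_1,\ldots,g_m \in \Real[X_1,\ldots,X_{n-1},Y]$, produce a quantifier-free $\psi(X_1,\ldots,X_{n-1})$ equivalent to $\exists Y\, \phi$. Geometrically this is the statement that the projection of a basic semialgebraic set in $\Real^n$ onto $\Real^{n-1}$ is again quantifier-free semialgebraic. The standard tool is the theory of subresultants (or Sturm--Habicht sequences) of the $g_i$, regarded as univariate polynomials in $Y$ with coefficients in $\Real[X_1,\ldots,X_{n-1}]$. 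These give a uniform, coefficient-polynomial description of the number of real roots of each $g_i$, of how the roots interleave across different $g_i$, and of which sign patterns on $(g_1,\ldots,g_m)$ are realized on the resulting intervals of the $Y$-line. The set of parameter values admitting a sign pattern consistent with $\phi$ is then defined by a quantifier-free formula in these subresultant polynomials, which serves as the required $\psi$.

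The main obstacle is precisely this projection step: constructing, in a coefficient-polynomial way, sign and root conditions on the parameters that faithfully capture all possible $Y$-slices of $\phi$. Once this is established, an $n_1$-fold iteration of projection, followed by DNF expansion, delivers the desired representation of the semialgebraic set as a finite union of basic systems. The bulk of the technical work, and the source of the complexity bound for the full algorithm, lies in controlling the degrees and number of subresultant polynomials produced at each iteration; for the qualitative Tarski--Seidenberg statement, however, only their existence and polynomial dependence on the parameters is needed.
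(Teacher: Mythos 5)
The paper never proves Tarski--Seidenberg: it states the theorem as a known classical fact in Section~1.3 and moves on, with the remark that ``a set defined by a formula with quantifiers can be defined by a formula without quantifiers.'' Your sketch is therefore not competing with a proof in the text but rather supplying one of the standard proofs, and it is a legitimate outline of it. The reductions you list are sound: prenex form, DNF of the quantifier-free matrix, $\forall X\,\Phi\equiv\neg\exists X\,\neg\Phi$ together with closure of atomic formulas under negation, and then iterated elimination of a single $\exists$. The remaining burden --- the one-variable projection step --- is correctly identified as the crux, and subresultants (or Sturm--Habicht sequences) are indeed the right tool for carrying it out uniformly in the parameters. What you leave implicit, and should at least flag, is that the subresultant coefficients alone do not directly encode the realizable sign conditions; one needs a Sturm-type sign-counting argument (in the spirit of the Ben-Or--Kozen--Reif machinery the paper develops in Section~4 in a nonparametric setting) applied uniformly to the parametric Sturm--Habicht sequences. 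That is where the ``faithfully capture all $Y$-slices'' claim is actually cashed out.

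It is worth noting that your route is genuinely different from the algorithm the paper builds in Sections~5--6, which amounts to a constructive proof of the same theorem. The paper works by reducing to nonstrict inequalities, perturbing by $\varepsilon$ to force isolated critical points, forming the $u$-resultant of the perturbed gradient system, extracting the lowest-order $\varepsilon$-coefficient $R_m$, and locating roots via intersection with a generic line; elimination is then done by viewing the whole decision procedure as a parametric algebraic decision tree and bounding the number of realized branches via a count of consistent sign assignments (Lemma~\ref{le:sa}). That machinery buys singly exponential complexity $(kd)^{O(n)}$ per block of quantifiers, which the variable-by-variable subresultant projection cannot achieve: your approach, iterated over $n_1$ variables, produces the doubly exponential degree growth characteristic of CAD. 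For the purely qualitative Tarski--Seidenberg statement your route is simpler and entirely adequate; the paper's heavier construction is needed only to get the complexity bounds that are its actual subject.
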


In other words, a set defined by a formula with quantifiers can be
defined by a formula without quantifiers.
The process of passing from one representation to another is called {\em quantifier elimination}.

\begin{remark} All the above definitions can be modified to
suit the field $\mathbb C$ of complex numbers rather than the reals.
Inequalities $f>0$ should then be replaced by $f \neq 0$.
Subsets of ${\mathbb C}^n$ defined by formulae with quantifiers are called ``constructible''.
Analogy of the Tarski-Seidenberg Theorem is also true.
\end{remark}

\subsection{Formal input}

The general problem we are concerned with in this course is quantifier elimination.
The input for this problem is of the kind
$$(Q_1 X^{(1)}) \cdots (Q_q X^{(q)}) P(Z, X^{(1)}, \ldots , X^{(q)}).$$

Here:
\begin{itemize}

\item
$Q_1, \ldots , Q_q$ are quantifiers $\exists$ or $\forall$;

\item
$X^{(i)}=(X_{1}^{(i)}, \ldots , X_{n_i}^{(i)})$ --- bound
variables;

\item $Z=(Z_1, \ldots , Z_{n_0})$ --- free variables;

\item $P$ --- Boolean combination of $k$ ``atomic'' formulae
of the kind $f>0$ or $f=0$, and for every $f$ the degree $\deg (f)<d$.
\end{itemize}

Set $n= n_0+n_1+ \cdots + n_q$.

\section{Complexity}

\subsection{Complexity for systems of inequalities}

Model of computation: Turing machine (or RAM --- random access machine).
\medskip

Consider the system of inequalities
\begin{equation}\label{eq:ineq}
f_1 \ge 0, \ldots ,  f_k \ge 0
\end{equation}
with $f_i \in {\mathbb Z}[X_1, \ldots X_n]$ (as usual, ${\mathbb Z}$ stands for
the ring of all integers), $\deg (f_i) <d$, maxima of bit lengths of
coefficients of $f_i$, $l(f_i) <M$.

We consider first the problem of deciding consistency of system (\ref{eq:ineq}).

There are some deep reasons to look for an upper complexity bounds of the kind
$$(kd)^EM^{O(1)}.$$

The first algorithm for the problem was proposed by A.Tarski in
late 40s \cite{T}.
In his bound $E$ can't be estimated from above by any tower of exponents.
Such bounds are called ``non-elementary'' and indicate that the corresponding
algorithm is absolutely impractical.

In mid-1970s G. Collins \cite{C} and, independently, H.R. W\"uthrich \cite{W} published an algorithm, called ``Cylindrical
Algebraic Decomposition'', for which $E=2^{O(n)}$.
Note that the Gr\"obner basis algorithm for the similar problem over
complex numbers has essentially the same complexity.

Our aim is to construct an algorithm with $E=O(n)$.
Thus, the complexity bound we are going to get is
$(kd)^{O(n)}M^{O(1)}$.

\subsection{Complexity for quantifier elimination}

For quantifier elimination problem the upper complexity bound has
the same form as for deciding consistency of systems:
$$(kd)^EM^{O(1)}.$$

For Cylindrical Algebraic Decomposition (CAD) the value of $E$
is again equal to $2^{O(n)}$.

Further in this course we are going to describe, with some detail,
an algorithm with
$$E= \prod_{0 \le i \le q} O(n_i) < (cn)^q$$
for a certain constant $c$.

\subsection{``Real numbers'' model}

Along with the Turing machine (bit) model of computation, one can consider
another model in which the elementary step is an arithmetic
operation or comparison operation over real numbers.
This model is more abstract in the sense that no constructive representation
of real numbers is assumed, the latter are viewed merely as symbols.
Real numbers model, therefore, can't serve as a foundation of algorithm's theory
in classical sense, however a formal theory of computation based on this
model exists (Blum-Shub-Smale theory).

Complexity results for real numbers model are usually stronger than similar
results for bit model.
One reason is that computations in bit model can take advantage of a clear
connection between the size of the input and metric characteristics of
definable semialgebraic sets, which allows sometimes to use rapidly
converging methods of numerical analysis.
Another technical advantage of bit model is a possibility to use
effective polynomial factorization as a subroutine which is not always
possible for polynomials with abstract coefficients.

On the weak side, real number model algorithm, being applied to polynomials
with essentially constructive (e.g., integer) coefficients, can't
control the growth of intermediate data.
Thus, it is theoretically possible that the bit complexity of an algorithm is
significantly worse than it's real number complexity.
In practice, however this almost never happens.

Tarski's algorithm works in real number model as well as for Turing machines.
Collins' CAD in original version can't handle symbolic coefficients, but
probably could be redesigned for real number model.

In this lecture course we follow the approach of James Renegar.
Our algorithms will work equally well in both models.

The complexity of algorithms in real number model will be of the
form $(kd)^E$ with
$E= O(n)$ for deciding consistency problem and
$E= \prod_{1 \le i \le q} O(n_i)$ for quantifier elimination problem.

\subsection{Lower bounds}

Observe that the number of the monomials (terms of the kind
$a_{i_1, \ldots , i_n}X_{1}^{i_1} \cdots X_{n}^{i_n}$) in a
polynomial in $n$ variables of the degree $d$ can be equal to
$$\binom{ d+n} {n } = d^{O(n)},$$
so the complexity $(kd)^{O(n)}$ for deciding consistency
is quite close to the ``best possible'' (for {\em dense} representation of polynomials, i.e.,
if we agree to write zero coefficients).
The best possible would be $kd^{O(n)}$, but the existence
of an algorithm with such complexity is a known open problem.

For quantifier elimination problem there is a result of Davenport
and Heintz \cite{DH} proving that polynomials occuring in quantifier-free
formula {\it after} quantifier elimination can have the degrees
not less than $d^{n^q}$.
It follows that Renegar's algorithm for quantifier elimination
is ``close to the best possible''.

\section{Univariate case: Sturm's theorem}

\subsection{Computational problems for univariate polynomials}

The idea behind almost all methods for solving the problems under
consideration, is to reduce the problems to the univariate case.
All straightforward methods, like CAD, do that, our effective
algorithms will use the same trick.

Strangely enough, we shall not consider the problem that seems the
most natural here: of {\em finding} solutions of univariate equations or inequalities.
This may mean producing a rational approximation with a given
accuracy to a root.
This problem belongs, in fact, to numerical analysis in the classical sense
and is rather far from the methods of this course.
However, it's worth mentioning that such an approximation indeed
can be produced in time polynomial in the length of the
description of input polynomials and accuracy bound.

So, if we don't ``compute'' roots (in the above sense) what is left algorithmically?
Over ${\mathbb C}$ (complex numbers) there is really not much to do
since, according to Fundamental Theorem of Algebra, any polynomial
$f$ has exactly $\deg_X(f)$ roots, and if we want to ``define'' a root
by an irreducible over rational numbers polynomial, we simply have
to factorize $f$.
Also, if we have a system of equations $f_1= \cdots =f_k=0$
we can decide its consistency by computing GCD (greatest common
divisor) over rational numbers of $f_1, \ldots , f_k$.

In real case, already deciding consistency of $f=0$ is a nontrivial task.

\subsection{Sturm's Theorem: formulation}

Let $f \in \Real[X]$, an interval $[a,b] \subset \Real$, the degree
$\deg (f) < d$.

Let $f'$ be the derivative of $f$.

Set
\medskip

\noindent $h_1=f$;
\medskip

\noindent $h_2=f'$;
\medskip

\noindent $h_{i+1}$ --- the reminder of division of $h_{i-1}$ by $h_i$,
taken with the opposite sign: $h_{i-1}=q_ih_i- h_{i+1}$,
$\deg (h_{i+1}) < \deg (h_i)$.
Thus, $h_{i+1}=q_ih_i- h_{i-1}$.
\medskip

Continue the process of producing polynomials $h_i$ while the division
is possible (i.e., until the reminder becomes a constant).
If the last (constant) reminder $h_t \neq 0$ (in this case $f$ and $f'$
are relatively prime), then divide all $h_i$'s by $h_t$.
The resulting sequence of polynomials $f_1=f/h_t, f_2=f'/h_t, f_3=h_3/h_t,
\ldots, f_{t-1}=h_{t-1}/h_t, f_t=1$
is called {\it Sturm's sequence}.
If $h_t =0$, then $f_1=f/h_{t-1}, f_2=f'/h_{t-1}, \ldots ,
f_{t-2}=h_{t-2}/h_{t-1},1$ is the {\it Sturm sequence}.

For any $x \in \Real$ let $V(x)$ be the number of {\it sign changes}
in the sequence of values of the Sturm's sequence: $f_1(x), f_2(x), \ldots$
Formally, $V(x)$ is the number of pairs $(i,j)$ of indices in Sturm's
sequence such that $i<j$, $f_i(x)f_j(x)<0$ and $f_k(x)=0$ if $i<k<j$.

\begin{theorem}[Sturm's Theorem] The number of distinct real roots
of $f$ in $[a,b]$ is $V(a) - V(b)$.
\end{theorem}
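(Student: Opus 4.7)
The plan is to treat $V(x)$ as a piecewise-constant integer-valued function on $[a,b]$ whose jumps occur only at zeros of some $f_i$, and to compute the jump at each such point. Two structural facts drive everything. First, $f_1 = f/h_t$ has precisely the distinct real roots of $f$, each of multiplicity one: dividing $f$ by $h_t = \gcd(f,f')$ collapses each multiple root of $f$ to a simple root and makes $f_1, f_2$ coprime. Second, no two consecutive $f_i, f_{i+1}$ share a zero, since the Sturm recurrence $f_{i+1} = q_i f_i - f_{i-1}$ would propagate a common zero back through the sequence down to $f_t$, contradicting $f_t = 1$.

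Fix a point $c \in (a,b)$ at which some $f_i$ vanishes. Consider first the case where $c$ is a zero of an interior polynomial $f_i$ with $1 < i < t$ and $f_1(c) \neq 0$. By the second fact, $f_{i-1}(c)$ and $f_{i+1}(c)$ are both nonzero, and the recurrence evaluated at $c$ yields $f_{i-1}(c) = -f_{i+1}(c)$, so these two values have strictly opposite signs throughout a small neighbourhood of $c$. Whatever the sign of $f_i$ happens to be just to the left or right of $c$, the triple $(f_{i-1}, f_i, f_{i+1})$ then contributes exactly one sign change on each side, leaving the local count unchanged.

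The remaining case is $c$ a root of $f$, so $f_1(c) = 0$ and $f_2(c) \neq 0$. Because the root of $f_1$ at $c$ is simple, $f_1$ changes sign at $c$ while $f_2$ does not, and checking the two sub-cases $f_2(c) > 0$ and $f_2(c) < 0$ shows that the pair $(f_1, f_2)$ switches from exhibiting one sign change just to the left of $c$ to none just to the right. Thus each real root of $f$ in $(a,b)$ contributes $-1$ to the jump of $V$, while interior zeros of other $f_i$ contribute $0$. Summing over all zeros of the Sturm sequence in $[a,b]$ and assuming $f(a), f(b) \neq 0$ yields $V(a) - V(b)$ equal to the number of distinct real roots of $f$ in $[a,b]$; the boundary case is handled by a limiting argument replacing $a, b$ with nearby non-roots.

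The main obstacle, as usual, is the interior-zero analysis: one must verify that the sign-change count really is independent of how $f_i$ behaves across $c$, which requires a short enumeration of the possible sign patterns of $(f_{i-1}, f_i, f_{i+1})$ on either side. A secondary subtlety is that several $f_i$ may vanish simultaneously at $c$; coprimality of consecutive members forces the vanishing indices to be non-adjacent, so the contributions decouple and each is handled by one of the two cases above.
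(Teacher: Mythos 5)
The paper does not actually give a proof of Sturm's Theorem --- the ``proof'' there is simply the remark that one ``can be easily found in literature'' --- so there is no internal argument to compare against. Your proof is the standard textbook one: normalize by $h_t=\gcd(f,f')$ so that $f_1$ has only simple zeros and consecutive terms are coprime, then show that $V$ is locally constant except at zeros of $f_1$, where it drops by exactly one. The argument is correct. Two small points you correctly flagged but only sketched: (i) the decoupling when several non-adjacent $f_i$ vanish at the same point $c$ is justified by unwinding the paper's definition of $V(x)$ (pairs of nearest nonzero neighbours), which makes the windows around distinct vanishing indices contribute disjoint pairs even when they share an intermediate index; and (ii) the theorem as stated tacitly assumes $f(a),f(b)\neq 0$ (otherwise the count at the endpoint is ambiguous), which you handle by perturbing $a,b$. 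One more detail worth making explicit is that the Sturm recurrence $f_{i+1}=q_i f_i - f_{i-1}$ continues to hold \emph{verbatim} after dividing the entire $h$-sequence by $h_t$, since $h_t$ divides every $h_i$; this is what licenses using the recurrence for the $f_i$'s in both the interior-zero analysis and the coprimality propagation.
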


\begin{proof}
Can be easily found in literature.
\end{proof}

\subsection{Sturm's Theorem: discussion}

Sturm's Theorem gives and algorithm for deciding whether $f$ has a real root.
Indeed, from the formulation follows how to decide this for an interval $[a,b]$.
It's well known that {\it all} real roots of a univariate polynomial
$a_dX^d + \cdots + a_1X + a_0$ belong to the interval $[-R,R]$ with
$$R = 1 + \max_{0 \le i \le d-1} | a_i/a_d |.$$
So, it is enough to compute $V(-R)-V(R)$.
If this difference is $>0$ then there is a real root.

A simpler way is to pass from intervals to whole $\Real$ is to notice
that for any polynomial $f= a_dX^d + \cdots + a_1X + a_0$ the sign of
$f(\infty)$ (i.e., sign of $f$ at any sufficiently large point
$x \in \Real$) coincides with the sign of $a_d$ (or, equivalently, with the
sign of $a_dX^d$ at 1), while the sign of $f(-\infty)$ coincides with the
sign of $a_dX^d$ at $-1$.

Observe also that Sturm's Theorem provides conditions in terms of polynomial
equations and inequalities on coefficients of $f$, which are true if and only if
$f$ has a root in $\Real$.

\section{Univariate case: Ben-Or -- Kozen -- Reif algorithm}

\subsection{Towards generalization of Sturm's Theorem}

Our nearest goal is to generalize Sturm's Theorem to the case of several
univariate polynomials.

Let $f_1, \ldots , f_k \in \Real[X]$ with degrees $\deg (f_i)< d$ for all $i,\> 1 \le i \le k$.

\begin{definition}\label{def:css}
Consistent sign assignment for
$f_1, \ldots , f_k$ is a string
$$\sigma = ( \sigma_1, \ldots , \sigma_k ),$$
where $\sigma_i \in \{ >,<,= \}$, such that the system
$f_1 \sigma_1 0, \ldots , f_k \sigma_k 0$ has a root in $\Real$.
\end{definition}

We want to generalize Sturm's Theorem and construct an algorithm for listing all consistent sign assignments.
Moreover, we want to get polynomial conditions on coefficients,
guaranteeing the existence of a common root.

The following algorithm belongs to M. Ben-Or, D. Kozen, and J. Reif \cite{BKR}.

\subsection{Preparation of the input}

First we ``prepare'' the input family so that it would be more simple.
\medskip

\noindent (1)\quad Make all $f_i$'s {\it squarefree} and {\it relatively prime}.
It is sufficient to compute greatest common divisor (GCD) $g_i$
of $f_i$ and its derivative $f'_i$ for all $1 \le i \le k$ (with the help of
Euclidean algorithm), and then to divide each $g_i$ by GCD $g$ of
the family $g_1, \ldots , g_k$.
Now we can reconstruct all consistent sign assignments of $f_1, \ldots , f_k$
from all consistent sign assignments of $g, g_1, \ldots , g_k$.

Throughout this portion of notes we assume that
all $f_i$'s are {\it squarefree} and {\it relatively prime}.
It follows that all the sets of all roots of polynomials $f_i$ are disjoint,
thus {\it any} consistent sign assignment for the family
has {\it at most one} ``='' sign.
\medskip

\noindent (2)\quad Add a new member $f$ in the family $f_1, \ldots , f_k$,
so that any consistent sign assignment for $f, f_1 , \ldots , f_k$ has
{\it exactly} one ``='' sign, and
each consistent sign assignment for $f_1, \ldots , f_k$ can be obtained
from a consistent sign assignment for $f, f_1, \ldots , f_k$.
Let the product $f_1 \cdots f_k = a_{kd}X^{kd}+ \cdots + a_0$.
Define
$$R= 1+ \max_{0 \le j < kd} \left\{ \frac{a_j}{a_{kd}} \right\}.$$
Then we can take
$$f= \frac{ d (f_1 \cdots f_k)}{d X } (X-R) (X + R).$$
Now the problem can be reformulated as follows:
list all consistent assignments of signs $<$ and $>$ for
$f_1, \ldots , f_k$ at roots of $f$.

\subsection{Cases $k=0$ and $k=1$}

For $k=0$ use Sturm's sequence obtained by Euclidean algorithm.

Denote $V(- \infty)-V ( \infty ) = S(f,f')$ (recall that we start
with Euclidean division of $f$ by $f'$).

In case $k=1$ we have a family of two polynomials $f, f_1$.

Denote:
$$ C_1 = \{ f=0\> \&\> f_1 >0 \};$$
$$ \overline {C_1} = \{ f=0\> \&\> f_1<0 \}.$$
For a finite set $A$, let $|A|$ be the number of elements in $A$.
Obviously, $S(f,f')= |C_1| + |\overline {C_1}|$.

The following trick is very important for generalization of Sturm's
Theorem.
Construct Sturm's sequence of remainders starting with dividing
$f$ by $f'f_1$ (rather than simply by $f'$ as in original
Sturm's sequence).
Denote the difference of numbers of sign changes at $- \infty$ and
at $\infty$ by $S(f,f'f_1)$.
\bigskip

\begin{lemma}\label{le:difference}
$S(f, f'f_1)= |C_1|-|\overline {C_1}|$.
\end{lemma}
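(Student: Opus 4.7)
The plan is to mimic the proof of the classical Sturm theorem, tracking how the sign-change count $V(x)$ of the modified Sturm-like sequence
$$h_1 = f, \quad h_2 = f'f_1, \quad h_{i+1} = -\mathrm{rem}(h_{i-1}, h_i)$$
(and its quotient by the final $\gcd$) evolves as $x$ sweeps from $-\infty$ to $+\infty$. Since $V(x)$ is integer-valued and piecewise constant, it can change only at a point $\alpha$ where some $h_i(\alpha)=0$. I will show that at an ``interior'' zero the count does not change, while at a zero of $h_1=f$ the count jumps by exactly $-\mathrm{sign}(f_1(\alpha))$. Summing these jumps yields the claimed difference $|C_1|-|\overline{C_1}|$.

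For the interior case, suppose $h_i(\alpha)=0$ for some $1<i<t$. The division identity $h_{i-1}=q_i h_i - h_{i+1}$ gives $h_{i-1}(\alpha)=-h_{i+1}(\alpha)$. A standard inductive check (using that $\gcd(h_j,h_{j+1})$ divides the final nonzero constant, a consequence of $f$ being squarefree and coprime to $f_1$) shows $h_{i+1}(\alpha)\neq 0$, so $h_{i-1}(\alpha)$ and $h_{i+1}(\alpha)$ have strictly opposite signs. In a punctured neighborhood of $\alpha$ the signs of $h_{i-1},h_{i+1}$ are locked, and the triple $(h_{i-1},h_i,h_{i+1})$ contributes exactly one sign change regardless of the sign of $h_i$. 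Consequently the total sign-change count is unchanged as $x$ crosses $\alpha$.

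For the case $h_1(\alpha)=f(\alpha)=0$, note that $f'(\alpha)\neq 0$ because $f$ is squarefree, and $f_1(\alpha)\neq 0$ because $f$ and $f_1$ are coprime, so $h_2(\alpha)=f'(\alpha)f_1(\alpha)$ is nonzero of sign $\mathrm{sign}(f'(\alpha))\cdot\mathrm{sign}(f_1(\alpha))$, locally constant near $\alpha$. Meanwhile $f$ has sign $-\mathrm{sign}(f'(\alpha))$ just to the left of $\alpha$ and $+\mathrm{sign}(f'(\alpha))$ just to the right. A direct comparison shows the pair $(h_1,h_2)$ exhibits a sign change to the left exactly when $f_1(\alpha)>0$, and to the right exactly when $f_1(\alpha)<0$. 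Therefore $V$ drops by $1$ across $\alpha$ if $f_1(\alpha)>0$ and rises by $1$ if $f_1(\alpha)<0$; no other indices are affected because Case~1 already handled any simultaneous zero at interior positions.

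Summing the local contributions over all real roots $\alpha$ of $f$ gives
$$V(-\infty)-V(\infty)=\#\{\alpha:f(\alpha)=0,\,f_1(\alpha)>0\}-\#\{\alpha:f(\alpha)=0,\,f_1(\alpha)<0\}=|C_1|-|\overline{C_1}|,$$
which is the desired identity. The main delicate point is the analysis at a root of $f$: one must justify both that $h_2(\alpha)\neq 0$ (needing squarefreeness of $f$ and coprimality with $f_1$, which follow from the input preparation after reducing $f$ to its squarefree part relative to $f_1$) and that the local sign-change computation does indeed translate cleanly into $-\mathrm{sign}(f_1(\alpha))$. The interior-zero argument is the standard telescoping used in the classical Sturm proof and presents no new difficulty.
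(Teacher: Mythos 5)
Your proof is correct and is exactly the ``trivial modification of the original Sturm proof'' that the paper invokes without writing out: the telescoping argument shows interior zeros leave the sign-change count unchanged, while at each root $\alpha$ of $f$ the pair $(f, f'f_1)$ produces a local jump of $-\mathrm{sign}\bigl(f_1(\alpha)\bigr)$, which sums to $|C_1| - |\overline{C_1}|$. The hypotheses you lean on --- coprimality of $f$ with $f_1$, and effective squarefreeness of $f$ once the whole remainder sequence is divided by the terminal $\gcd(f,f'f_1)=\gcd(f,f')$ --- are supplied by the input preparation and the normalization built into the Sturm-sequence construction, so the argument is complete.
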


\begin{proof}
Trivial modification of a proof of original
Sturm's Theorem.
We don't consider the proof in this course.
\end{proof}

Original Sturm and this Lemma imply that the following system of linear
equations with the unknowns $|C_1|, |\overline {C_1}|$ is true:
$$
\left( \begin{array}{cc}
1 & 1\\
1 & -1
\end{array} \right)
\left( \begin{array}{c}
|C_1|\\
| \overline {C_1} |
\end{array} \right)=
\left( \begin{array}{c}
S(f, f')\\
S(f, f'f_1)
\end{array} \right).
$$

Denote the $(2 \times 2)$-matrix of this system by $A_1$.

The value $|C_1|$ determines whether  $\{ f=0\> \&\> f_1>0 \}$ is consistent.
The value $| \overline {C_1} |$ determines whether $ \{ f=0\> \&\> f_1<0 \}$
is consistent.
Hence, after the linear system is solved all consistent sign assignments
will be listed.

\subsection{Case $k=2$}\label{ss:k=2}

In this case we come to the following system of linear equations:
$$\left( \begin{array}{cccc}
1 & 1 & 1 & 1\\
1 & -1 & 1 & -1 \\
1 & 1 & -1 & -1\\
1 & -1 & -1 & 1
\end{array} \right)
\left( \begin{array}{c}
|C_1 \cap C_2|\\
| \overline {C_1} \cap C_2 |\\
|C_1 \cap \overline {C_2} |\\
| \overline {C_1} \cap \overline {C_2} |
\end{array} \right)
=
\left( \begin{array}{c}
S(f, f')\\
S(f, f'f_1)\\
S(f, f'f_2)\\
S(f, f'f_1f_2)
\end{array}
\right),
$$
where
$$C_2= \{ f=0\> \&\> f_2>0 \},\quad
\overline {C_2}= \{ f=0\> \&\> f_2<0 \}.$$

Denote the matrix of this system by $A_2$.

Observe, that if this system of linear equations is true, then all
consistent sign assignments for $f, f_1, f_2$ will be found.

Checking that the system is indeed true, is straightforward:
\medskip

\noindent (1)\quad $|C_1 \cap C_2| + |C_1 \cap \overline {C_2}|$
is the number of all roots of $f$ with $f_1>0$, while
$| \overline {C_1} \cap C_2| + | \overline {C_1} \cap \overline {C_2}|$
is the number of all roots of $f$ with $f_1<0$, so the left-hand part
of the first equation is the whole number of roots of $f$;
\medskip

\noindent (2)\quad $|C_1 \cap C_2| + |C_1 \cap \overline {C_2}|$
is the number of all roots of $f$ with $f_1>0$, while
$| \overline {C_1} \cap C_2| + | \overline {C_1} \cap \overline {C_2}|$
is the number of all roots of $f$ with $f_1<0$, so the left-hand part
of the second equation equals to the difference between these two
quantities, which is $S(f, f'f_1)$ by the Lemma~\ref{le:difference};
\medskip

\noindent (3)\quad similar to (2);
\medskip

\noindent (4)\quad $|C_1 \cap C_2| + | \overline {C_1} \cap \overline {C_2}|$
is the number of all roots of $f$ with $f_1$ and $f_2$ having the same sign
(i.e., $f_1f_2>0$), while
$| \overline {C_1} \cap C_2| + |C_1 \cap \overline {C_2}|$
is the number of all roots of $f$ with $f_1$ and $f_2$ having distinct signs
(i.e., $f_1f_2<0$), so the left-hand part
of the last equation is $S(f, f'f_1f_2)$ by the Lemma~\ref{le:difference}
(applied to $f_1f_2$ instead of $f_1$).

\subsection{Tensor product of matrices}

Observe that $A_2= A_1 \otimes A_1$, where $\otimes$ is the operation of
{\em tensor product}.

\begin{definition}
Let $A= \| a_{ij}\|_{1 \le i \le m;
1 \le j \le n}$ and $B= \| b_{ij} \|_{1 \le i \le m'; 1 \le j \le n'}$
be two matrices.
Then the tensor product of $A$ by $B$ is the $(mm' \times nn')$-matrix
$$
A \otimes B=
\left( \begin{array}{ccc}
a_{11}B & \cdots & a_{1n}B\\
\cdots & \cdots & \cdots\\
a_{m1}B & \cdots & a_{mn}B
\end{array} \right)
,$$
where
$$a_{ij}B=
\left( \begin{array}{ccc}
a_{ij}b_{11} & \cdots & a_{ij}b_{1n'}\\
\cdots & \cdots & \cdots\\
a_{ij}b_{m'1} & \cdots & a_{ij}b_{m'n'}
\end{array} \right)
.$$
\end{definition}

\subsection{Case of arbitrary $k$}

Let us consider now the case of a family $f, f_1, \ldots , f_k$ for arbitrary $k$.
Define the matrix $A_k= A_1 \otimes A_1 \otimes \cdots \otimes A_1$ ($k$ times).
Let $C$ be the vector of all possible elements of the form
$|D_1 \cap \cdots \cap D_k|$ where $D_i \in \{ C_i, \overline {C_i} \},\>
1 \le i \le k$; and $S$ be the vector of all possible elements of the form
$S(f, f'f_{j_1} \cdots f_{j_t})$ for all $(j_1, \ldots , j_t) \subset
\{1, \ldots , k \}$.
Herewith, the component of $C$ having ``bared'' sets $\overline {C_{j_1}}, \ldots ,
\overline {C_{j_t}}$ corresponds to the component
$S(f, f'f_{j_1} \cdots f_{j_t})$ of $S$.

\begin{lemma}
$A_kC=S$
\end{lemma}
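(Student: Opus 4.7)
The plan is to compute each component of $A_k C$ directly and recognize it, via Lemma~\ref{le:difference}, as the corresponding component of $S$. The $k=2$ verification in Subsection~\ref{ss:k=2} is exactly this argument carried out by hand, and I would simply do it in general.

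First I would fix indexing conventions that make the tensor structure transparent. Label the rows of $A_k$ by subsets $J \subseteq \{1,\ldots,k\}$ (row $J$ corresponding to the entry $S(f, f'\prod_{j\in J} f_j)$ of $S$), and the columns by sign patterns $\varepsilon = (\varepsilon_1,\ldots,\varepsilon_k) \in \{+1,-1\}^k$ (column $\varepsilon$ corresponding to $|D_1^{\varepsilon_1} \cap \cdots \cap D_k^{\varepsilon_k}|$, where $D_i^{+1} = C_i$ and $D_i^{-1} = \overline{C_i}$). Checking the $k=1$ case, $A_1$ has entries $(A_1)_{J,\varepsilon} = \prod_{j\in J}\varepsilon_j$, and a short induction on $k$ using only the definition of $\otimes$ yields
$$(A_k)_{J,\varepsilon} = \prod_{j\in J}\varepsilon_j.$$
One should also verify that under this labeling $A_2$ coincides with the matrix written in Subsection~\ref{ss:k=2}, which is immediate from the table of signs.

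Next I would expand the $J$-th component of $A_k C$ and switch the order of summation. By the preparation of the input, every root $x$ of $f$ has $f_i(x)\ne 0$ for all $i$, so $x$ lies in exactly one intersection $D_1^{\varepsilon_1(x)} \cap \cdots \cap D_k^{\varepsilon_k(x)}$, where $\varepsilon_i(x) = \mathrm{sign}(f_i(x))$. Hence
$$(A_kC)_J = \sum_{\varepsilon} \Bigl(\prod_{j\in J}\varepsilon_j\Bigr)\bigl|D_1^{\varepsilon_1}\cap\cdots\cap D_k^{\varepsilon_k}\bigr| = \sum_{f(x)=0} \prod_{j\in J}\varepsilon_j(x) = \sum_{f(x)=0} \mathrm{sign}\Bigl(\prod_{j\in J} f_j(x)\Bigr).$$
Finally, applying Lemma~\ref{le:difference} with the single polynomial $\prod_{j\in J}f_j$ in place of $f_1$, the right-hand side equals $S(f, f'\prod_{j\in J}f_j) = S_J$, and we are done.

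The main obstacle is purely bookkeeping: making sure the subset-and-sign-pattern indexing of $A_k$ dictated by the iterated tensor product really matches the orderings used to write down $C$ and $S$, and confirming that the preparation step (in particular, that the $f_i$ are pairwise coprime and squarefree and that $\pm R$ are beyond all their real roots) forces every $f_i$ to be nonzero at every root of $f$, so that the sign patterns $\varepsilon_i(x)$ are well defined and the double counting in the middle equality above is justified.
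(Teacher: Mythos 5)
Your proposal is correct and is essentially the argument the paper has in mind: the paper's own ``proof'' is merely the remark that the claim is a ``straightforward observation'' generalizing the $k=2$ case worked out in Subsection~\ref{ss:k=2}, and your computation of $(A_k)_{J,\varepsilon}=\prod_{j\in J}\varepsilon_j$ followed by the swap of summation order and the appeal to Lemma~\ref{le:difference} with $\prod_{j\in J}f_j$ in place of $f_1$ is exactly that observation carried out in full.
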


\noindent {\bf Proof.}\quad Streightforward observation (similar to particular
case of $k=2$ in Section~\ref{ss:k=2}).
\medskip

Square matrix $A_k$ is nonsingular.
Its unique solution describes all consistent sign assignments for
$f, f_1, \ldots , f_k$.
Solving the system by any of standard methods (e.g., Gaussian elimination)
we list all consistent sign assignments.
Let us estimate the complexity of this algorithm.

From definition of tensor product it follows that the order of $A_k$ is
$2^k \times 2^k$.
Vectors $C$ and $S$ have obviously $2^k$ components each.
Each component of the vector $S$ can be computed
(in real numbers model) by Euclidean division
in time polynomial in $kd$, and we need to find $2^k$ components.
In bit model, supposing that the bit sizes of integer coefficients of
polynomials $f, f_1, \ldots , f_k$ are less than $M$, vector $S$
can be computed with complexity polynomial in $M2^k d $.
The complexity of Gaussian algorithm is polynomial in the order of the matrix,
i.e., in $2^k$ (in real numbers model) and $M 2^k$ (in bit model).
Thus, the total running time of finding all consistent sign assignments
via system $A_kC=S$ is polynomial in $2^kd$ (real numbers) or in $M2^kd$ (bit).

We want, however, to obtain an algorithm whose running time is polynomial
in $k,d$ (real numbers) and in $M, k, d$ (bit).

\subsection{Divide-and-conquer algorithm}

A key observation to get a better algorithm is that since $\deg (f_i) < d$
and $\deg (f) < kd+2$, the number of roots of $f$ is less than $kd+2$, and
therefore, the total number of all consistent sign assignments is less than
$kd+2$.

It follows that among the components of the solution of the system $A_kC=S$
there are less than $kd+2$ nonzero ones.
Thus, $A_kC=S$ is equivalent to a much smaller (for large $k$)
$\bigl( (kd+2) \times (kd+2) \bigr)$-linear system.
Now the problem is how to construct that smaller system effectively.

We shall apply the usual divide-and-conquer recursion on $k$
(supposing for simplicity of notations that $k$ is even).

Suppose that we can do all the computation in the case of $k/2$.
Namely, assume that we already have the solutions for two problems
for two families of polynomials:
$$f, f_1, \ldots , f_{k/2}\quad {\rm and}\quad f, f_{k/2+1}, \ldots , f_k.$$
This means, we have constructed:
\medskip

\noindent (1)\quad two square nonsingular matrices $A$ and $A'$ with elements
from $\{ -1, 1 \}$, of orders $m \times m$ and $n \times n$ respectively,
$m,n < kd+2$;
\medskip

\noindent (2)\quad two vectors $S$ and $S'$ of sizes $m$ and $n$ respectively
with elements of the forms
$$S(f, f' \prod I),\> I \subset \{ f_1, \ldots , f_{k/2} \}$$
and
$$S(f, f' \prod J),\> J \subset \{ f_{k/2+1}, \ldots , f_k \}$$
respectively, such that the systems $AC=S,\> A'C'=S'$ have solutions
$C,C'$ respectively with {\it nonzero coordinates only} of the forms
$$\Big\arrowvert \bigcap_{q \in I}C_q \cap \bigcap_{q \in
\{ f_1, \ldots , f_{k/2} \}
\setminus I } \overline {C_q} \Big\arrowvert,\> I \subset
\{ f_1, \ldots , f_{k/2} \},$$
and
$$\Big\arrowvert \bigcap_{q \in J}C_q \cap \bigcap_{q \in
\{ f_{k/2+1}, \ldots , f_{k} \}
\setminus J } \overline {C_q} \Big\arrowvert,\> J \subset
\{ f_{k/2+1}, \ldots , f_{k} \}$$
respectively;
\medskip

\noindent (3)\quad solutions $C$ and $C'$ which correspond to all consistent
sign assignments for $f_1, \ldots , f_{k/2}$ and $f_{k/2+1}, \ldots , f_k$
respectively.
\medskip

Now let us construct a new linear system.
As a matrix take $A \otimes A'$, which is a square nonsingular
$(mn \times mn)$-matrix.

Two vectors $S,S'$ are combined in a new column vector $SS'$ consisting of
$m$ column vectors placed end-to-end, each of length $n$.

The $(i,j)$th component of $SS'$ is
$$S \bigl( f, f'\prod (I \triangle J) \bigr),$$
where $S(f, f' \prod I)$ is the $i$th component of $S$,
$S(f, f' \prod J)$ is the $j$th component of $S'$, and
$\triangle$ denotes the {\em symmetric difference} of two sets of polynomials:
$$I \triangle J = (I \cup J) \setminus (I \cap J).$$

Finally, two vectors $C$ and $C'$ are combined into a new column vector
$CC'$.

Components of $CC'$ are indexed by pairs $(i,j)$.
The $(i,j)$th component of $CC'$ is $|E \cap E'|$ where $|E|$ is the $i$th
component of $C$ and $|E'|$ is $j$th component of $C'$.

\begin{lemma}\label{le:tensor}
$(A \otimes A')(CC')=SS'$.
\end{lemma}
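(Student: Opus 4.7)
The plan is to verify the identity coordinatewise via an explicit combinatorial description of both sides. Write
$$N_K = \Big|\bigcap_{q \in K} C_q \cap \bigcap_{q \notin K} \overline{C_q}\Big|$$
for the number of roots of $f$ realizing the sign pattern indexed by a subset $K \subset \{f_1, \ldots, f_k\}$. The main ingredient, obtained by iterated application of Lemma~\ref{le:difference} in the spirit of the $k=2$ verification of Section~\ref{ss:k=2}, is the identity
$$S\Bigl(f, f' \prod_{q \in L} f_q\Bigr) \;=\; \sum_{K} (-1)^{|L \setminus K|} N_K,$$
which holds because at a root with sign pattern $K$ the product $\prod_{q \in L} f_q$ is negative precisely when $|L \setminus K|$ is odd. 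The recursive construction of the matrix $A$ as a tensor power of $A_1$ (possibly restricted to the rows and columns surviving from earlier reductions) ensures that its entries have the closed form $A_{I,I'} = (-1)^{|I \setminus I'|}$; the same applies to $A'$.

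Next I would identify the components of $CC'$: since the variable blocks of the two halves are disjoint, the intersection $E \cap E'$ of the underlying sets of a component $I'$ of $C$ and a component $J'$ of $C'$ is precisely the sign-pattern set for $I' \cup J' \subset \{f_1, \ldots, f_k\}$, so $(CC')_{(I',J')} = N_{I' \cup J'}$.

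Expanding the $(I,J)$-component of $(A \otimes A')(CC')$ then gives
$$\sum_{I',J'} (-1)^{|I \setminus I'| + |J \setminus J'|}\, N_{I' \cup J'}.$$
The decisive observation is that disjointness of the two index halves yields $|I \setminus I'| + |J \setminus J'| = |(I \cup J) \setminus (I' \cup J')|$. Reparametrizing via $K = I' \cup J'$ (any globally inconsistent $K$ has $N_K = 0$ and can be added freely) collapses this to $\sum_K (-1)^{|(I \cup J) \setminus K|} N_K$, which by the displayed identity equals $S(f, f' \prod_{q \in I \cup J} f_q)$. Since $I$ and $J$ are disjoint, $I \triangle J = I \cup J$, and this is exactly the $(I,J)$-component of $SS'$.

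The main subtlety is the index bookkeeping — correctly identifying $|E \cap E'|$ with $N_{I' \cup J'}$ (not with $|E| \cdot |E'|$, as an ordinary tensor product would suggest), and using disjointness of the two halves to merge two sign exponents into a single one over the combined index. Once this is in place, the Sturm-theoretic content is wholly absorbed into the displayed identity, and the verification becomes a compact manipulation.
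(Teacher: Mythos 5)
The paper explicitly omits a proof of this lemma (``We don't consider a proof of Lemma~\ref{le:tensor} in the course''), so there is nothing in the source to compare against; your argument fills that gap, and it is correct. Your proof generalizes the $k=2$ verification of Section~\ref{ss:k=2} in the natural way: the entry formula $A_{I,I'}=(-1)^{|I\setminus I'|}$, the Sturm-type identity $S\bigl(f,f'\prod_{q\in L}f_q\bigr)=\sum_K(-1)^{|L\setminus K|}N_K$, the identification $(CC')_{(I',J')}=N_{I'\cup J'}$, and the observation that extending the sum over non-surviving index pairs adds only zero terms are all accurately handled, as is the merge $|I\setminus I'|+|J\setminus J'|=|(I\cup J)\setminus(I'\cup J')|$ using disjointness of the two halves, which reduces $I\triangle J$ to $I\cup J$. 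This is a sound and complete argument in the spirit the paper suggests but does not carry out.
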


We don't consider a proof of Lemma~\ref{le:tensor} in the course.
\medskip

Observe that $CC'$ describes all consistent sign assignments for
$f, f_1, \ldots , f_k$.
However, unlike vectors $C$ and $C'$, the vector $CC'$ {\em may} have
zero components.
As was already noted, the new system has the order $mn \times mn$.
It may happen that $mn > kd+2$.
In this case $CC'$ indeed contains zero components, not less than
$mn - (kd+2)$.

By solving system, we find $CC'$, and after that {\em reduce} the system
$(A \otimes A')(CC')=SS'$.
Namely, we delete all zero components of $CC'$ and the corresponding columns
of $A \otimes A'$ to obtain a rectangular system of order at most $mn \times (kd+2)$.
After that, choose a basis among the rows of the matrix and form a new
$\bigl( (kd+2) \times (kd+2) \bigr)$-matrix.
Delete all components of $SS'$ {\it not} corresponding to basis rows.

The resulting system is the output of the recursive step of the algorithm.

\subsection{Complexity of Ben-Or--Kozen--Reif algorithm}

It's easy to see that the algorithm has the running time polynomial in $k,d$ (and $M$, in bit model).
Indeed, denoting the complexity function by $T(k)$, we get a functional
inequality
$$T(k) \le 2T(k/2) + p(kd),$$
for a certain polynomial $p$, whose solution $T(k) \le O \bigl( p(kd) \bigr)$.

\section{Systems of polynomial inequalities}\label{sec:systems}

\subsection{Deciding consistency of systems of polynomial inequalities}

We address the basic problem of deciding consistency of systems of inequalities
\begin{equation}\label{eq:nonstrict-strict}
f_1 \ge 0, \ldots , f_{k_1} \ge 0, f_{k_1+1} > 0, \ldots , f_k >0.
\end{equation}
Here polynomials $f_i \in {\mathbb Z}[X_1, \ldots , X_n]$ (for bit model)
or $f_i \in \Real[X_1, \ldots , X_n]$ (for real numbers model).
A version of the algorithm presented below belongs to James Renegar.

It is convenient to deal with systems of nonstrict inequalities only.
Introduce a new variable $Y$ and replace strict inequalities in (\ref{eq:nonstrict-strict}) by
$$Yf_{k_1+1} \ge 1, \ldots , Yf_k \ge 1, Y \ge 0.$$
Obviously, (\ref{eq:nonstrict-strict}) is consistent if and only if the new system is consistent.
Changing the notations, we shall consider, in what follows, the problem of consistency
of a system of nonstrict inequalities
\begin{equation}\label{eq:nonstrict}
f_1 \ge 0, \ldots , f_k \ge 0.
\end{equation}

Because it is easy to check whether or not the origin $(0, \ldots , 0)$ is a solution
of (\ref{eq:nonstrict}), we shall assume that the origin does not satisfy (\ref{eq:nonstrict}).

Finally, we suppose that the set $\{ (\ref{eq:nonstrict}) \}$ is bounded, if nonempty.
The nonbounded case can be reduced to a bounded by intersecting $\{ (\ref{eq:nonstrict}) \}$
with a ball in $\Real^n$ of a sufficiently large radius, we are not
considering the details of this construction in the course.

\subsection{Towards reduction to a system of equations}\label{sec:epsilon}

Let the degrees $\deg (f_i) < d$ for all $1 \le i \le k$, and $D$
be the minimal even integer not less than $kd + 1$.
Introduce a new variable $\varepsilon$, and consider the polynomial
$$g(\varepsilon)= \prod_{1 \le i \le k} (f_i + \varepsilon)
-\varepsilon^{k+1} \sum_{1 \le j \le n}X_{j}^{D}. $$
\bigskip

\begin{lemma}\label{le:2to3}
If (\ref{eq:nonstrict}) is consistent, then there is a solution of
(\ref{eq:nonstrict}) which is a limit of a root of
\begin{equation}\label{eq:system-with-g}
\frac{\partial g(\varepsilon)}{\partial X_1} = \cdots =
\frac{\partial g(\varepsilon)}{\partial X_n} =0
\end{equation}
as $\varepsilon \longrightarrow 0$.
\end{lemma}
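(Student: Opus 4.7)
The plan is to exhibit the desired feasible point as the limit, as $\varepsilon\to 0^+$, of critical points of $g(\varepsilon,\cdot)$ lying in a bounded region close to the feasible set.

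The role of the perturbation term $-\varepsilon^{k+1}\sum_jX_j^D$ is to make $g(\varepsilon,\cdot)$ coercive in $X$. Indeed, since $D$ is even, $D\ge kd+1>\deg\prod_if_i$, and the coefficient is negative, the leading $X$-form $-\varepsilon^{k+1}\sum_jX_j^D$ of $g(\varepsilon,\cdot)$ is strictly negative off the origin and dominates $\prod_i(f_i+\varepsilon)$ at infinity. Hence $g(\varepsilon,X)\to-\infty$ as $\|X\|\to\infty$, so $g(\varepsilon,\cdot)$ attains its global maximum on $\Real^n$ at some finite point $y(\varepsilon)$; this maximizer is automatically a root of~(\ref{eq:system-with-g}). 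Moreover, for any feasible point $x^*$ of~(\ref{eq:nonstrict}) (which exists and is bounded by hypothesis), the estimate
$$
g(\varepsilon,x^*)=\prod_i(f_i(x^*)+\varepsilon)-\varepsilon^{k+1}\sum_j(x_j^*)^D\ \ge\ \varepsilon^k-C\varepsilon^{k+1}\ \ge\ \tfrac12\varepsilon^k
$$
holds for small $\varepsilon>0$ (using $f_i(x^*)\ge 0$), so $g(\varepsilon,y(\varepsilon))\ge\tfrac12\varepsilon^k$.

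To take the limit I would invoke the semialgebraic structure. By Tarski--Seidenberg, the set $\{(\varepsilon,X):\varepsilon>0,\ \nabla_Xg(\varepsilon,X)=0\}$ is semialgebraic, so one can select a single-valued semialgebraic branch $\varepsilon\mapsto y(\varepsilon)$ of critical points on some interval $(0,\varepsilon_0)$. Each coordinate of this branch is a semialgebraic function of $\varepsilon$ and admits a Puiseux expansion at $0^+$, hence has a limit in $\Real\cup\{\pm\infty\}$.

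The main obstacle is choosing the branch so that its limit $y^*$ is finite and lies in $\{f_1\ge 0,\ldots,f_k\ge 0\}$. When the leading form of $\prod_if_i$ takes positive values in some direction, the \emph{global} maximizer of $g(\varepsilon,\cdot)$ in fact drifts to infinity as $\varepsilon\to 0$ (balancing $\prod_if_i\sim L\|X\|^{kd}$ against the stabilizer $\varepsilon^{k+1}\|X\|^D$ gives a critical point at scale $\|X\|\sim\varepsilon^{-(k+1)/(D-kd)}$); so one cannot simply take the global maximizer, but must select the branch of local critical points sitting near the compact feasible set. In the nondegenerate case where $\max\prod_if_i$ over the feasible set is positive, this maximum is attained at a point $x^*$ where all $f_i(x^*)>0$, so $x^*$ is a critical point of the smooth function $\prod_if_i$ on an open set, and the implicit function theorem yields a branch of critical points of $g(\varepsilon,\cdot)$ converging to $x^*$. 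Once $y(\varepsilon)$ is known to be bounded, the lower bound $\prod_i(f_i(y(\varepsilon))+\varepsilon)\ge\tfrac12\varepsilon^k(1+o(1))>0$ together with $y(\varepsilon)$ staying close to the feasible set rules out the sign pattern ``all $f_i(y(\varepsilon))+\varepsilon<0$'', so each $f_i(y(\varepsilon))+\varepsilon>0$, and sending $\varepsilon\to 0$ gives $f_i(y^*)\ge 0$, as required.
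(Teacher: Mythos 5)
There is a genuine gap, and in fact you have correctly identified it yourself and then failed to close it. You observe that the global maximiser of $g(\varepsilon,\cdot)$ can escape to infinity as $\varepsilon\to 0$, so ``one cannot simply take the global maximizer.'' The patch you offer --- the implicit function theorem producing a branch of critical points converging to a maximiser $x^*$ of $\prod_i f_i$ with all $f_i(x^*)>0$ --- covers only a nondegenerate special case: it requires that the feasible set have nonempty interior (so that $\max\prod_i f_i>0$ on it), \emph{and} that the Hessian of $\prod_i f_i$ at $x^*$ be nonsingular so that the IFT applies. Neither is guaranteed. Moreover, your closing step is wrong even granting boundedness: from $\prod_i(f_i(y(\varepsilon))+\varepsilon)>0$ you conclude ``each $f_i(y(\varepsilon))+\varepsilon>0$,'' but a positive product permits an even number of negative factors, so ruling out only the all-negative pattern is not enough.

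The paper's proof uses a different and more robust device that you are missing: rather than chasing a maximiser over all of $\Real^n$, it fixes a feasible point $x$, fixes a small $\varepsilon>0$, and works inside the connected component $C$ of $\{g(\varepsilon)>0\}$ that contains $x$. This single choice resolves all three of your difficulties at once. First, each $f_i+\varepsilon$ is strictly positive throughout $C$: it is positive at $x$, and if it changed sign inside the connected set $C$ it would vanish somewhere in $C$, forcing $\prod_i(f_i+\varepsilon)=0$ there and hence $g(\varepsilon)\le 0$, contradicting $C\subset\{g(\varepsilon)>0\}$. (This is exactly the sign argument you needed and could not get from positivity of the product alone.) Second, $C$ is bounded: if it were not, one could find $y\in C$ arbitrarily far from the bounded feasible set, and a {\L}ojasiewicz-type inequality then forces some $f_i(y)+\varepsilon<0$, contradicting the previous point. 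Third, since $g(\varepsilon)=0$ on $\partial C$ and $g(\varepsilon)>0$ on the bounded open $C$, the restriction of $g(\varepsilon)$ to $\overline{C}$ attains its maximum at an interior point $v\in C$, which is therefore a critical point of $g(\varepsilon)$ satisfying $f_i(v)+\varepsilon>0$ for every $i$; letting $\varepsilon\to 0$ gives the claim. Your semialgebraic-branch/Puiseux observation for passing to the limit is fine and matches the spirit of the paper, but the heart of the lemma --- producing a \emph{bounded} critical branch on which \emph{each} $f_i+\varepsilon$ is positive --- is not established by your argument.
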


\begin{proof}
Let $x \in \{ (\ref{eq:nonstrict}) \}$.
Then for all sufficiently small positive values of $\varepsilon$
$$\varepsilon^{k+1} \sum_j  x_{j}^{D} < \prod_i (f_i(x) +\varepsilon),$$
hence $g(\varepsilon)(x) >0$.
Fix for a time being one of these values of $\varepsilon$.

Let $C$ be any connected component of $\{ g(\varepsilon)>0 \}$ such that $x \in C$.
Note that at all points $v \in C$ the sign of $f_i(v)+ \eps$ is the same for each $i \in \{ 1, \ldots ,k \}$,
hence $f_i(v)+ \eps >0$.

Observe that $C$ is bounded.
Indeed, suppose $C$ is {\em not bounded}.
Then, there exists a point $y \in C \setminus \{(5.2) \}$ at an arbitrarily large distance
from $\{ (5.2) \}$.
In particular, for such $y$ there is at least one $i \in \{ 1, \ldots ,k \}$, with $f_i(y) <0$,
having a large distance ${\rm dist} (y, \{ f_i(y)=0 \})$.
It follows from basic facts of analytic geometry ({\L}ojasiewicz inequality), that for
${\rm dist} (y, \{ f_i(y)=0 \})$ sufficiently larger than $\eps$, we have $| f_i(y)|> \eps$, i.e.,
$f_i(y)+ \eps <0$.
Since $C$ is connected, there is a connected (semialgebraic) curve
$\Gamma \subset C$ containing $x$ and $y$.
Because $f_i(x)+\eps > 0$ and $f_i(y)+ \eps <0$, there exists a point
$z \in \Gamma$ with $f_i(z)+ \eps=0$.
We get $g(\varepsilon)(z)= - \varepsilon^{k+1} \sum_jz_{j}^{D} \le 0$, which contradicts
$z \in \{ g(\varepsilon)>0 \}$.

At every point $w$ in the {\it boundary} of $C$ polynomial $g(\varepsilon)(w)=0$.
It follows that $g(\varepsilon)$ attains a {\it local maximum}
at a point $v \in C$.
It is well known that any such point $v$ is a root of the system of equations
$$\frac{\partial g(\varepsilon)}{\partial X_1}(v) = \cdots =
\frac{\partial g(\varepsilon)}{\partial X_n}(v) =0.$$
Also, since $v \in C$, we have
$$f_1(v)+\varepsilon > 0, \ldots , f_k(v)+\varepsilon > 0.$$
Passing to limit in the latter system of inequalities, we get
$$f_1(v') \ge 0, \ldots , f_k(v') \ge 0,$$
where $v \longrightarrow v'$ as $\varepsilon \longrightarrow 0$.

The lemma is proved.
\end{proof}

\subsection{Homogeneous polynomials and projective spaces}

Any polynomial is an expression of the form
$$\sum_{(i_1, \ldots , i_n)} a_{i_1, \ldots , i_n}
X_{1}^{i_1} \cdots X_{n}^{i_n}.$$

A polynomial is called {\it homogeneous polynomial} or {\it form}
if $i_1 + \cdots + i_n$ is a non-zero constant for all $(i_1, \ldots , i_n)$
with $a_{i_1, \ldots , i_n} \neq 0$.
\bigskip

\noindent {\bf Examples.}
\medskip

\noindent (i)\quad Homogeneous polynomial: $X^2Y^3 + X^5 + XY^4$;
\medskip

\noindent (ii)\quad Homogeneous polynomial: $X^2 - Y^2$;
\medskip

\noindent (iii)\quad Nonhomogeneous polynomial: $X^2-1$.
\medskip

Observe that zero is always a root of a homogeneous polynomial.
Also, if $x=(x_1, \ldots , x_n) \in {\mathbb C}^n$ is a root of a homogeneous polynomial
$h \in {\mathbb C}[X_1, \ldots , X_n]$,
then for any $a \in {\mathbb C}$ the point $ax=(ax_1, \ldots , ax_n)$ is again
a root of $h$.
For a fixed $x \neq 0$ the set of all points of the form $ax,\> a \in {\mathbb C}$ is a straight
line passing through the origin.
So, if $x \neq 0$ is a root of $h$, then the line passing through $x$ and the
origin consists of roots of $h$.
Thus, we can say that the {\it line itself is a root} of $h$.

This motivates the idea to consider (in homogeneous case)
a space of {\it straight lines}, passing through
the origin, instead of the usual space of {\it points}.
It is called an $(n-1)$-{\it dimensional projective space}
and is denoted by ${\mathbb P}^{n-1}({\mathbb C})$.
An element of ${\mathbb P}^{n-1}({\mathbb C})$, which is a line passing through
$x=(x_1, x_2, \ldots , x_n) \neq (0, \ldots , 0)$, is denoted by
$(x_1: x_2: \cdots : x_n)$ (i.e., we use colon instead of comma).
Of course, for any $0 \neq a \in {\mathbb C}$ elements of ${\mathbb P}^{n-1}({\mathbb C})$:
$$(x_1: x_2: \cdots : x_n)\quad {\rm and}\quad (ax_1: ax_2: \cdots : ax_n)$$
coincide.
The usual space of points ${\mathbb C}^n$ is called {\it affine} if there is
a possibility of a confusion.

Observe that our construction is leading to the projective space
of the dimension $n-1$, which
is quite natural as can be seen from the example of
${\mathbb P}^1({\mathbb C})$.
The latter is the space of all straight lines on the plane $P$
of (complex) coordinates $X, Y$, passing through the origin.
All elements of the projective space ${\mathbb P}^1({\mathbb C})$,
except one, correspond bijectively to all points of the line $\{ X=1 \}$.
Namely, if $(x:y) \in {\mathbb P}^1({\mathbb C})$ and $x \neq 0$,
then $(x:y)= \bigl( 1:(y/x) \bigr) \in {\mathbb P}^1({\mathbb C})$,
and {\it the point} of $P$ $\bigl( 1, (y/x) \bigr) \in \{ X=1 \}$.
The only element, for which this correspondence fails, is
$(0:y)$ (for all $y \in {\mathbb C}$ this is the same element,
called {\it point at infinity}).
Thus, we have a parametrization of ${\mathbb P}^1({\mathbb C})$
minus point at infinity by points of the line.
This justifiers the definition of the dimension of ${\mathbb P}^1({\mathbb C})$
as ``one''.

An arbitrary polynomial $h \in {\mathbb C}[X_1, \ldots , X_n]$ can be naturally
homogenized.
Namely, let $\deg_{X_1, \ldots, X_n}(h) =d$.
Introduce a new variable $X_0$ and consider the homogeneous polynomial
${\rm hom}(h)=X_{0}^{d}h(X_1/X_0, \ldots , X_n/X_0)$.
If $(x_1, \ldots , x_n)$ is a root of $h$, then $(1:x_1: \cdots :x_n)$
is a root of ${\rm hom}(h)$.
On the other hand, if $(y_0:y_1: \cdots :y_n)$ is a root of ${\rm hom}(h)$
and $y_0 \neq 0$, then $(y_1/y_0, \ldots , y_n/y_0)$ is a root of $h$.
So, there is a bijective correspondence between roots of $h$ and
roots of of ${\rm hom}(h)$ with $y_0 \neq 0$.
However, ${\rm hom}(h)$ can have a root of the form $(0:z_1: \cdots :z_n)$
(then, of course, at least one of $z_1, \ldots , z_n$ is different from 0).
This root does not correspond to any root of $h$ (in the sense described
above), it's called ``root at infinity''.
\bigskip

\noindent {\bf Example.}
\medskip

\noindent $h=X_1-X_2+1;$

\noindent ${\rm hom}(h)=X_1-X_2+X_0.$

\noindent $(0:1:1)=(0:a:a)$ for any $0 \neq a \in {\bf C}$,
is the only root at infinity of ${\rm hom}(h)$.
\bigskip

The described above relation between the roots of a polynomial and it's
homogenization can be extended to roots of {\it systems of polynomial
equations}.
A system can have {\it no roots} (i.e., be inconsistent), while its homogenization can have roots.
\bigskip

\noindent {\bf Example.}
\medskip

\noindent $X_1-X_2=X_1-X_2+1=0;$

\noindent homogenization: $X_1-X_2=X_1-X_2+X_0=0.$

\noindent The first system is inconsistent, while the homogenized one
has the unique root at infinity $(0:1:1)$.
\bigskip

We conclude that the homogenized system has not less roots than the
original system.
This implies, for instance, that if the homogenized system has
{\it at most finite number of roots}, then the original system
has at most finite number of roots as well.
We shall use this remark further on.

The reason for considering homogeneous equations in projective
space instead of arbitrary ones in affine space, is that the projective
case is much richer in useful properties.

\subsection{$u$-resultant}

Consider a system of $n$ {\it homogeneous} polynomial equations
in variables $X_0, X_1, \ldots , X_n$:
\begin{equation}\label{eq:hom-eq}
h_1= \cdots =h_n=0.
\end{equation}

Let $\deg (h_i) < d$ for each $i \in \{ 1, \ldots ,n \}$.

For some deep algebraic reasons,
which we are not discussing in this course,
the system (\ref{eq:hom-eq}) is {\it always consistent} in ${\mathbb P}^n({\mathbb C})$.
It can have either finite or infinite number of roots.

Introduce new variables $U_0, U_1, \ldots , U_n$.
\bigskip

\begin{lemma}\label{le:u-res}
There exists a homogeneous polynomial
$R_{h_1, \ldots , h_n} \in {\mathbb C}[U_0, \ldots , U_n]$ called $u$-resultant,
such that $R_{h_1, \ldots , h_n} \equiv 0$ (is identically zero) if and
only if (\ref{eq:hom-eq}) has infinite number of roots.
Moreover, if $R_{h_1, \ldots , h_n} \not\equiv 0$
(and, thus (\ref{eq:hom-eq}) has finite number $r$ of roots), then it can be decomposed
into linear factors:
$$R_{h_1, \ldots , h_n}= \prod_{1 \le i \le r}
(\chi_{0}^{(i)}U_0 + \chi_{1}^{(i)}U_1 + \cdots + \chi_{n}^{(i)}U_n)$$
where $(\chi_{0}^{(i)}: \cdots :\chi_{n}^{(i)}),\> 1 \le i \le r$ are
all the roots of (\ref{eq:hom-eq}) in ${\mathbb P}^n({\mathbb C})$.
The degree of $R$ is $d^{O(n)}$.
\end{lemma}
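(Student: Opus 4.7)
The plan is to construct $R_{h_1,\ldots,h_n}$ as the classical Macaulay multivariate resultant of the $n+1$ homogeneous polynomials $h_1,\ldots,h_n, L$ in the $n+1$ variables $X_0,\ldots,X_n$, where
$$L(X_0,\ldots,X_n) = U_0X_0 + U_1X_1 + \cdots + U_nX_n$$
is a ``generic'' linear form whose coefficients are the new variables $U_0,\ldots,U_n$. First I would invoke the fundamental property of the Macaulay resultant: for $n+1$ homogeneous forms in $n+1$ variables, the resultant is a polynomial in their coefficients that vanishes if and only if the forms have a common zero in ${\mathbb P}^n({\mathbb C})$. Specializing to our setting (viewing $h_1,\ldots,h_n$ as fixed and the coefficients of $L$ as indeterminate) yields $R_{h_1,\ldots,h_n} \in {\mathbb C}[U_0,\ldots,U_n]$; its multihomogeneity in the coefficients of each input, together with the fact that $L$ is linear, makes it homogeneous in $U_0,\ldots,U_n$ of degree $\prod_{i=1}^n \deg(h_i) \le d^n = d^{O(n)}$, which gives the claimed degree bound.

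Next I would establish the identical-vanishing criterion. For a numerical specialization $u=(u_0,\ldots,u_n)$, the fundamental property above says $R_{h_1,\ldots,h_n}(u)=0$ exactly when the hyperplane $\bigl\{\sum_j u_jX_j=0\bigr\}$ passes through some common root of $h_1,\ldots,h_n$ in ${\mathbb P}^n({\mathbb C})$. If that common zero set is infinite, then every hyperplane meets it, so $R_{h_1,\ldots,h_n}$ vanishes on all of ${\mathbb C}^{n+1}$ and is therefore the zero polynomial. Conversely, if there are only finitely many roots $p_1,\ldots,p_r$, then the set of hyperplanes avoiding all of them is Zariski-open and nonempty, so $R_{h_1,\ldots,h_n}$ is not identically zero.

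For the factorization, assume the root set is finite, with roots $p_i = (\chi_0^{(i)}:\cdots:\chi_n^{(i)})$. Then the zero locus of $R_{h_1,\ldots,h_n}$ in $U$-space is exactly the union of the $r$ hyperplanes
$$\Lambda_i := \bigl\{\chi_0^{(i)}U_0 + \cdots + \chi_n^{(i)}U_n = 0\bigr\},$$
since vanishing at $u$ is equivalent to some $p_i$ lying on $\{L=0\}$. By the Nullstellensatz and homogeneity of $R_{h_1,\ldots,h_n}$, it equals (up to a nonzero scalar) a product $\prod_{i=1}^{r}\bigl(\chi_0^{(i)}U_0 + \cdots + \chi_n^{(i)}U_n\bigr)^{m_i}$ for some multiplicities $m_i \ge 1$. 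A standard multiplicity computation on the resultant along each $\Lambda_i$ identifies $m_i$ with the intersection multiplicity of $h_1,\ldots,h_n$ at $p_i$; in the generic transverse case each $m_i=1$ and $r=\prod_i \deg(h_i)$, consistent with B\'ezout and with the degree bound from step one.

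The principal obstacle is that the argument rests heavily on classical facts about the Macaulay resultant --- its existence as a polynomial in the coefficients, its homogeneity, its behaviour under specialization, and the precise vanishing criterion --- none of which are reproved here. The subtlest individual point is the factorization itself, where one must justify both the set-theoretic identification of the zero locus with $\bigcup_i \Lambda_i$ and the multiplicity of each linear factor; in line with the philosophy of the course, these are taken as known from classical elimination theory rather than reconstructed from scratch.
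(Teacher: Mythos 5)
Your proposal is correct and takes essentially the same route as the paper: introduce the generic linear form $L=\sum_j U_j X_j$ and realize the $u$-resultant as the (Macaulay) resultant of $h_1,\ldots,h_n,L$, then read off the degree, the vanishing criterion, and the linear factorization from classical elimination theory. The paper itself explicitly declines to prove this lemma in place, deferring to the later subsection on computing the $u$-resultant, where the Macaulay matrix $M$ of the map $(h_0,\ldots,h_n)\mapsto g_0h_0+\cdots+g_nh_n$ is constructed and a hint is given that each linear form $\sum_j\xi_j^{(i)}U_j$ divides $\det(M')$; your version fills in a bit more (the identical-vanishing criterion via the fact that a positive-dimensional projective variety meets every hyperplane, and the Nullstellensatz step identifying the zero locus with the union of hyperplanes), but both accounts rest on the same classical resultant machinery taken as known, so the approach and level of rigour are the same.
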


We are not considering the full proof of this important lemma in this course.
At certain point our algorithm will need to {\em compute} the $u$-resultant.
At that point we shall give a sketch of the procedure together with some
hints on a proof of Lemma~\ref{le:u-res}.

\subsection{System (\ref{eq:system-with-g}) has a finite number of roots}

\begin{lemma}\label{le:finite-number}
For all sufficiently small positive values of
$\varepsilon$, if (\ref{eq:nonstrict}) is consistent, then (\ref{eq:system-with-g}) has a finite number of roots.
\end{lemma}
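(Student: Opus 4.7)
The plan is to homogenize the system (\ref{eq:system-with-g}) with respect to a new variable $X_0$, verify that the resulting system in $\mathbb{P}^n(\mathbb{C})$ has no common zero on the hyperplane at infinity, and then conclude via Lemma~\ref{le:u-res} (or equivalently via basic projective dimension theory) that only finitely many affine roots can exist. The whole argument will hinge on the careful choice $D \ge kd+1$ made in Section~\ref{sec:epsilon}: this guarantees that the perturbation $-\varepsilon^{k+1}\sum_j X_j^D$ strictly dominates $\prod_i(f_i + \varepsilon)$ in $X$-degree, producing a transparent leading form for each partial derivative.

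First I would identify the leading forms. Since $\deg_X \prod_i (f_i + \varepsilon) \le k(d-1) < D$, the degree-$D$ homogeneous component of $g(\varepsilon)$ in the variables $X_1, \ldots, X_n$ is precisely $-\varepsilon^{k+1}\sum_j X_j^D$. Differentiating, $h_j := \partial g/\partial X_j$ has $X$-degree $D-1$ with top-degree form $-\varepsilon^{k+1} D\, X_j^{D-1}$. Setting
$$H_j(X_0, X_1, \ldots, X_n) := X_0^{D-1}\, h_j(X_1/X_0, \ldots, X_n/X_0),$$
each $H_j$ is homogeneous of degree $D-1$, and its restriction to the hyperplane at infinity satisfies $H_j(0, X_1, \ldots, X_n) = -\varepsilon^{k+1} D\, X_j^{D-1}$.

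Next I would check for roots at infinity of the system $H_1 = \cdots = H_n = 0$ in $\mathbb{P}^n(\mathbb{C})$. A would-be projective root $(0:x_1:\cdots:x_n)$ would have to satisfy $-\varepsilon^{k+1} D\, x_j^{D-1} = 0$ for every $j$. Since $\varepsilon > 0$, this forces $x_1 = \cdots = x_n = 0$, which is not a legitimate projective point. Hence no projective solutions lie on the hyperplane $\{X_0 = 0\}$.

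To finish, I would invoke standard projective dimension theory: $V(H_1, \ldots, H_n) \subset \mathbb{P}^n(\mathbb{C})$ has dimension at least $n - n = 0$, and if it were positive-dimensional it would necessarily meet every hypersurface — in particular the hyperplane $\{X_0 = 0\}$ — contradicting the previous paragraph. So the variety is zero-dimensional; equivalently, by Lemma~\ref{le:u-res}, the $u$-resultant $R_{H_1, \ldots, H_n}$ is not identically zero. Since all projective roots lie in the affine chart $\{X_0 \ne 0\}$, they biject with the roots of (\ref{eq:system-with-g}) in $\mathbb{C}^n$, which are therefore finite in number. I do not really anticipate any serious obstacle: the core step is just computing the leading forms and observing that they decouple into pure powers of the individual $X_j$. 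It is worth remarking that the hypothesis that (\ref{eq:nonstrict}) is consistent plays no role in this argument — the finiteness conclusion actually holds for every positive $\varepsilon$.
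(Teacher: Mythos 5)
Your proof is correct, but it takes a genuinely different and in fact sharper route than the one in the paper. The paper never examines the hyperplane at infinity directly. Instead it divides the homogenized system by $\varepsilon^{k+1}$, observes that as $\varepsilon \to \infty$ the rescaled system degenerates to $X_1^{D-1}=\cdots=X_n^{D-1}=0$ (which has exactly one projective root, so its $u$-resultant is nonzero), and then appeals to the fact that the $u$-resultant of the rescaled system is a rational function of $\varepsilon$: since it does not vanish identically in $U$ for large $\varepsilon$, it can vanish identically for at most finitely many values of $\varepsilon$, and in particular not for all small positive $\varepsilon$. This is a continuity argument in the parameter $\varepsilon$ and yields finiteness only for $\varepsilon$ outside a finite exceptional set. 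Your argument instead fixes an arbitrary $\varepsilon>0$, computes the top-degree forms of the partials (which decouple into pure powers $-\varepsilon^{k+1}D\,X_j^{D-1}$ precisely because $D > k(d-1) \ge \deg_X\prod_i(f_i+\varepsilon)$), observes there is no common projective zero with $X_0=0$, and invokes the projective dimension theorem to conclude zero-dimensionality. What this buys you: finiteness for every positive $\varepsilon$, not merely sufficiently small ones, and you correctly note that the consistency hypothesis on (\ref{eq:nonstrict}) is irrelevant to the conclusion. What the paper's version buys is that it relies only on the behaviour of the $u$-resultant already packaged in Lemma~\ref{le:u-res}, rather than on a separate appeal to the projective dimension theorem; for the paper's purposes (small $\varepsilon$) either route is adequate, and yours is the cleaner of the two.
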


\begin{proof}
We already know (Lemma~\ref{le:2to3}) that if (\ref{eq:nonstrict})
is consistent, then (\ref{eq:system-with-g}) has at least one root.
We also know (end of Section~5.3) that it's sufficient to prove that the
homogenization of (\ref{eq:system-with-g}) has finite number of roots in the projective
space ${\mathbb P}^n({\mathbb C})$.

Let $G_i(\varepsilon)$ be the homogenization of $\partial g(\varepsilon) /
\partial X_i$.
We shall now prove that
\begin{equation}\label{eq:homogen}
G_1(\varepsilon)= \cdots = G_n(\varepsilon)=0
\end{equation}
has finite number of roots in ${\mathbb P}^n({\mathbb C})$ for all sufficiently small $\eps >0$.

Homogeneous polynomial $G_i(\varepsilon)$ is of the form:
$$G_i(\varepsilon)=F(\varepsilon)- \varepsilon^{k+1}DX_{i}^{D-1},$$
where $F(\varepsilon)$ is a homogeneous polynomial (w.r.t. variables
$X_0, \ldots , X_n$).

The degree of $F(\varepsilon)$ (w.r.t. $\varepsilon$) is $k$.
Thus, for any fixed values of $X_0, \ldots , X_n$, we have $F(\varepsilon)/ \varepsilon^{k+1} \longrightarrow 0$ as
$\varepsilon \longrightarrow \infty$.

Observe that for $\varepsilon \neq 0$ the system (\ref{eq:homogen}) has the same
set of roots as
\begin{equation}\label{eq:over-eps}
\frac{G_1(\varepsilon)}{\varepsilon^{k+1}}= \cdots = \frac{ G_n(\varepsilon)}
{\varepsilon^{k+1}}=0.
\end{equation}

The $u$-resultant of (\ref{eq:over-eps}) is either identically zero (as a polynomial in
$U_0, \ldots , U_n$) for {\it all} values of $\varepsilon$, or is
identically zero for {\it at most finite number} of values of
$\varepsilon$.

The first alternative does not occur, because for {\it large} positive values of
$\varepsilon$, the system (\ref{eq:over-eps}) tends to
\begin{equation}\label{eq:eps-large}
X_{1}^{D-1}= \cdots = X_{n}^{D-1}=0,
\end{equation}
so the $u$-resultant of (\ref{eq:over-eps}) tends to $u$-resultant of (\ref{eq:eps-large}).
But (\ref{eq:eps-large}) has exactly one root in ${\mathbb P}^n({\mathbb C})$
(namely, $(1:0: \cdots : 0)$), so $u$-resultant for (\ref{eq:eps-large}), and hence
for (\ref{eq:over-eps}) is not identically zero for large values of $\varepsilon$.

It follows that the $u$-resultant is not identically zero also for
all sufficiently {\it small} positive values of $\varepsilon$,
so for these values (\ref{eq:homogen}) has a finite number of roots.
\end{proof}

\subsection{General scheme of the algorithm}

Recall that, according to Lemma~\ref{le:2to3}, if (\ref{eq:nonstrict}) is consistent,
then there is a solution to (\ref{eq:nonstrict}) which is a limit of a root of (\ref{eq:system-with-g})
as $\varepsilon \longrightarrow 0$.

On the other hand, for all sufficiently small positive values of $\varepsilon$
(\ref{eq:system-with-g}) has a finite number of roots, due to Lemma~\ref{le:finite-number}.
Moreover, the homogenization (\ref{eq:homogen}) of (\ref{eq:system-with-g}) has a finite number of roots.
Now Lemma~\ref{le:u-res} implies that every root of (\ref{eq:homogen}) is a coefficient
vector in a divisor of $u$-resultant $R_{G_1(\varepsilon), \ldots ,
G_n(\varepsilon)}$.

If we were able to factorize $R_{G_1(\varepsilon), \ldots ,
G_n(\varepsilon)}$, over ${\mathbb C}$, then the process of deciding
the consistency of (\ref{eq:nonstrict}) could be roughly as follows:
construct the $u$-resultant for (\ref{eq:homogen}), factorize, then
for each coefficient vector $(\chi_{0}^{(i)}, \ldots , \chi_{n}^{(i)})$
of each factor of the $u$-resultant find its component-wise limit
$(\eta_{0}^{(i)}, \ldots , \eta_{n}^{(i)})$, and check whether the point
$$(\eta_{1}^{(i)}/\eta_{0}^{(i)}, \ldots , \eta_{n}^{(i)}/\eta_{0}^{(i)})$$
satisfies the system (\ref{eq:nonstrict}).
If yes, then (\ref{eq:nonstrict}) is consistent, else (\ref{eq:nonstrict}) is inconsistent.

Unfortunately, we are unable to factorize a polynomial in general case.
Even in special cases, when effective multivariate factorization
algorithms are known, they are very involved.

Therefore our scheme will be different.
After constructing the $u$-resultant, we shall find another polynomial
in $U_0, \ldots , U_n$, whose factors correspond to limits of factors
of the $u$-resultant.

Instead of factoring this new polynomial, we shall use the fact that
the set of its roots in ${\mathbb C}^{n+1}$ is a union of
a finite number of {\it hyperplanes}, passing through the origin.
The coefficient vectors defining these hyperplanes are exactly
the coefficient vectors of factors and can be found by computing
the gradients of the polynomial at the appropriate points.

\subsection{Passing to limit in the $u$-resultant}

Recall that $R_{G_1(\varepsilon), \ldots , G_n(\varepsilon)}$ is
a polynomial in the coefficients of homogeneous polynomials
$G_i(\varepsilon)$.
In particular, it is a polynomial in $\varepsilon$.

Represent $R_{G_1(\varepsilon), \ldots , G_n(\varepsilon)}$
in the form:
$$R_{G_1(\varepsilon), \ldots , G_n(\varepsilon)}=
R_m \varepsilon^m + \sum_{j>m} R_j \varepsilon^j,$$
where $R_m$ is not identically zero in $U_0, \ldots , U_n$,
i.e., $m$ is the {\it lowerst} degree of a term
in $\varepsilon$ of $R_{G_1(\varepsilon), \ldots ,
G_n(\varepsilon)}$.
\bigskip

\begin{lemma}\label{le:u-res-m}
$$R_m= \prod_{1 \le i \le r} (\eta_{0}^{(i)}U_0 + \cdots + \eta_{n}^{(i)}U_n),$$
where $(\eta_{0}^{(i)}, \ldots , \eta_{n}^{(i)})$ is the limit of
$(\chi_{0}^{(i)}, \ldots , \chi_{n}^{(i)})$ as $\varepsilon \longrightarrow 0$
(some, but not all, of the $\eta^{(i)}$-vectors can be zero).
\end{lemma}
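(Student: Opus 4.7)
The plan is to exploit the factorization of the $u$-resultant from Lemma~\ref{le:u-res} together with a valuation analysis in $\varepsilon$ using Puiseux series.

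First, I would treat $\varepsilon$ as an indeterminate and work over the Puiseux-series field $\mathbb{C}\{\{\varepsilon\}\}$, which is algebraically closed. Since $G_1(\varepsilon),\ldots,G_n(\varepsilon)$ have coefficients in $\mathbb{C}[\varepsilon]\subset \mathbb{C}\{\{\varepsilon\}\}$, Lemma~\ref{le:u-res} yields a factorization
$$R_{G_1(\varepsilon),\ldots,G_n(\varepsilon)}(U) \;=\; \prod_{i=1}^{r} L_i(\varepsilon)(U), \qquad L_i(\varepsilon)(U) \;=\; \sum_{j=0}^{n} \chi_j^{(i)}(\varepsilon)\, U_j,$$
where each $\chi_j^{(i)}(\varepsilon)$ is a Puiseux series, parametrizing one branch of projective roots of (\ref{eq:homogen}) as $\varepsilon$ varies near $0$.

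Next, I would normalize each branch so that its limit at $\varepsilon=0$ exists and is not the zero vector. Each $L_i(\varepsilon)$ is determined only up to a common scalar factor (the projective ambiguity), so setting $\mu_i := \min_j v_\varepsilon\!\bigl(\chi_j^{(i)}(\varepsilon)\bigr)$ and replacing $L_i$ by $\tilde L_i := \varepsilon^{-\mu_i} L_i$ gives a linear form whose coefficients all have non-negative $\varepsilon$-valuation with at least one of valuation exactly $0$. Hence $\tilde L_i(0)(U) = \eta_0^{(i)}U_0+\cdots+\eta_n^{(i)}U_n$ is a well-defined, non-zero linear form. Absorbing the rescalings into a single global power of $\varepsilon$, the factorization becomes
$$R_{G_1(\varepsilon),\ldots,G_n(\varepsilon)}(U) \;=\; \varepsilon^{\sum_{i}\mu_i}\prod_{i=1}^{r} \tilde L_i(\varepsilon)(U).$$

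Finally, I would extract the lowest-order coefficient in $\varepsilon$ from both sides. On the left this is $R_m(U)\varepsilon^m$ by definition, and on the right the expansion produces the term $\varepsilon^{\sum_i \mu_i}\prod_i \tilde L_i(0)(U)$, whose coefficient is a product of non-zero linear forms and is therefore non-zero. Matching valuations forces $m = \sum_i \mu_i$ and
$$R_m(U) \;=\; \prod_{i=1}^{r} \tilde L_i(0)(U) \;=\; \prod_{i=1}^{r}\bigl(\eta_0^{(i)} U_0 + \cdots + \eta_n^{(i)} U_n\bigr),$$
which is the claimed identity. The parenthetical remark about vanishing $\eta^{(i)}$-vectors in the statement then reflects the fact that with a different (non-minimal) choice of normalization of the $\chi^{(i)}(\varepsilon)$ some branches may flatten to the zero vector, while the others still carry $R_m$.

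The main obstacle I anticipate is the careful bookkeeping of Puiseux expansions: one must justify that the $r$ projective roots of the parameter-dependent system can be labelled $1,\ldots,r$ so that each $\chi^{(i)}(\varepsilon)$ is a genuine algebraic branch of $\varepsilon$, and that the common-scalar rescaling can be performed consistently across all branches. This rests on algebraic closedness of $\mathbb{C}\{\{\varepsilon\}\}$ and the complete linear factorization asserted by Lemma~\ref{le:u-res} over any such field, so once these foundations are granted, the valuation count is routine.
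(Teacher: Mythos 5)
Your proof is correct and follows the same core idea as the paper's: factor the $u$-resultant into linear forms and identify $R_m$ with the lowest-order coefficient in $\varepsilon$ of that product. The difference is one of rigor rather than of route: the paper simply asserts that as $\varepsilon\to 0$ the roots of $R/\varepsilon^m$ converge to roots of $R_m$ and that each limiting hyperplane is described by $\lim_{\varepsilon\to 0}\chi^{(i)}$, without addressing the projective rescaling needed for those limits to exist or be nonzero; your passage to the Puiseux-series field, normalization by $\varepsilon^{-\mu_i}$, and valuation count make that step precise and in fact yield the sharper conclusion that, with the minimal-valuation normalization, none of the $\eta^{(i)}$-vectors vanish.
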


\begin{proof}
Consider
$$\frac{ R_{G_1(\varepsilon), \ldots , G_n(\varepsilon)}}{ \varepsilon^m }=
R_m + \sum_{j>m} R_j \varepsilon^{j-m}.$$

As $\varepsilon$ tends to zero, the polynomial
$R_{G_1(\varepsilon), \ldots , G_n(\varepsilon)}/ \varepsilon^m$ tends
to the polynomial $R_m$.
Hence the roots (in the projective space ${\mathbb P}^n({\mathbb C})$
with projective coordinates $U_0, \ldots , U_n$) of
$R_{G_1(\varepsilon), \ldots , G_n(\varepsilon)}$ tend to roots of
$R_m$, as $\varepsilon \longrightarrow 0$.

Because $R_{G_1(\varepsilon), \ldots , G_n(\varepsilon)}$ factors
linearly, the set of all roots of this polynomial is a
collection of $r$  hyperplanes
$\left\{ \chi_{0}^{(i)}U_0 + \cdots + \chi_{n}^{(i)}U_n =0 \right\}$.
Each hyperplane is uniquely described by its normal vector
$(\chi_{0}^{(i)}, \ldots , \chi_{n}^{(i)})$.

If follows that the set of all roots of $R_m$ is a collection
of {\it at most} $r$ hyperplanes, each of which is uniquely described
by its normal vector
$$\bigl( \lim_{\varepsilon \longrightarrow 0} (\chi_{0}^{(i)}), \ldots ,
\lim_{\varepsilon \longrightarrow 0} (\chi_{n}^{(i)}) \bigr).$$
Denoting
$$\eta_{j}^{(i)}= \lim_{\varepsilon \longrightarrow 0} (\chi_{j}^{(i)}) $$
for $1 \le i \le r;\> 0 \le j \le n$, we get:
$$R_m= \prod_{1 \le i \le r} (\eta_{0}^{(i)}U_0 + \cdots + \eta_{n}^{(i)}U_n).$$
\end{proof}

\begin{corollary}\label{cor:limit}
If (\ref{eq:nonstrict}) is consistent, then a solution
of (\ref{eq:nonstrict}) is
$$( \eta_{1}^{(i)} / \eta_{0}^{(i)} , \ldots ,
 \eta_{n}^{(i)} / \eta_{0}^{(i)} ),$$
where $\eta_{0}^{(i)} U_0 + \cdots + \eta_{n}^{(i)} U_n$ is
a divisor of $R_m$.
\end{corollary}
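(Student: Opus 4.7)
The plan is to trace the solution $v'$ produced by Lemma~\ref{le:2to3} through the homogenization and the $u$-resultant construction, identifying its projective lift with the normal vector of a linear factor of $R_m$.

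First, I would apply Lemma~\ref{le:2to3}: assuming (\ref{eq:nonstrict}) is consistent, we obtain a solution $v' \in \Real^n$ and a family $v(\varepsilon) \in \Real^n$ of roots of (\ref{eq:system-with-g}) with $v(\varepsilon) \to v'$ as $\varepsilon \to 0^+$. Each $v(\varepsilon)$ lifts to the projective point $(1 : v_1(\varepsilon) : \cdots : v_n(\varepsilon))$, which is a root of the homogenized system (\ref{eq:homogen}). By Lemma~\ref{le:finite-number}, for all sufficiently small $\varepsilon > 0$, the system (\ref{eq:homogen}) has only finitely many projective roots, and by Lemma~\ref{le:u-res} these are exactly the hyperplane normals $(\chi_0^{(i)} : \cdots : \chi_n^{(i)})$ appearing as linear factors of $R_{G_1(\varepsilon),\ldots,G_n(\varepsilon)}$.

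Next, for each such $\varepsilon$ I would choose the index $i_0 = i_0(\varepsilon)$ for which $(\chi_0^{(i_0)} : \cdots : \chi_n^{(i_0)})$ is the projective class of the lift of $v(\varepsilon)$, and fix the affine normalization $\chi_0^{(i_0)}(\varepsilon) = 1$, so that $\chi_j^{(i_0)}(\varepsilon) = v_j(\varepsilon)$ for $1 \le j \le n$. Then Lemma~\ref{le:u-res-m} gives $\eta_0^{(i_0)} = 1$ and $\eta_j^{(i_0)} = v_j'$. Hence $\eta_0^{(i_0)} U_0 + \eta_1^{(i_0)} U_1 + \cdots + \eta_n^{(i_0)} U_n = U_0 + v_1' U_1 + \cdots + v_n' U_n$ is a linear divisor of $R_m$, and $(\eta_1^{(i_0)}/\eta_0^{(i_0)}, \ldots, \eta_n^{(i_0)}/\eta_0^{(i_0)}) = v'$, which by construction satisfies (\ref{eq:nonstrict}).

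The main technical point is making sense of a coherent choice of the index $i_0(\varepsilon)$ across varying $\varepsilon$, so that the componentwise limits invoked from Lemma~\ref{le:u-res-m} actually apply to our branch. The standard way to handle this is to appeal to the fact that the projective roots of a polynomial system depending algebraically on $\varepsilon$ admit, on some punctured neighborhood of $\varepsilon = 0$, a continuous (indeed Puiseux) enumeration; one then follows the single branch carrying $v(\varepsilon)$. The secondary subtlety is ensuring that $\eta_0^{(i_0)} \ne 0$ so that the ratio is meaningful: this holds because $v(\varepsilon) \to v'$ remains bounded in affine space, forcing the chosen representative (normalized to have first coordinate $1$) to preserve this normalization in the limit. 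This is precisely the regime in which Lemma~\ref{le:u-res-m}'s caveat ``some, but not all, of the $\eta^{(i)}$-vectors can be zero'' applies nontrivially, and it is the nonvanishing branch that Corollary~\ref{cor:limit} exploits.
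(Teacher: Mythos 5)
Your proof is correct and takes essentially the same route the paper implicitly intends: the corollary is stated without proof because it is meant to be the immediate consequence of chaining Lemmas~\ref{le:2to3}, \ref{le:finite-number}, \ref{le:u-res}, and \ref{le:u-res-m}, which is exactly what you do. Your attention to the two subtleties the paper leaves unstated — the need for a coherent (Puiseux) choice of the branch $i_0(\varepsilon)$ carrying $v(\varepsilon)$, and the boundedness of $v(\varepsilon)$ forcing $\eta_0^{(i_0)} \ne 0$ so that the affine ratios are defined — correctly fills in what makes the componentwise limits in Lemma~\ref{le:u-res-m} applicable to the right factor.
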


\subsection{The algorithm modulo $u$-resultant construction}\label{sec:algorithm-modulo}

We noticed in the proof of the Lemma~\ref{le:u-res-m} that factors of $R_m$
correspond to hyperplanes in $(n+1)$-dimensional affine space of coordinates
$U_0, \ldots , U_n$.
Namely, $\eta_{0}^{(i)}U_0 + \cdots + \eta_{n}^{(i)}U_n=0$, $1 \le i \le r$
are equations for the hyperplanes.
Then $(\eta_{0}^{(i)}, \ldots , \eta_{n}^{(i)})$ are vectors orthogonal
to the corresponding hyperplanes.
Obviously, the orthogonal vectors are collinear to gradients, i.e.,
vectors of the kind
$$\biggl( \frac{\partial R_m}{\partial U_0}(x), \ldots ,
\frac{\partial R_m}{\partial U_n}(x) \biggr),$$
for $x$ taken on all hyperplanes (provided that these gradients are nonzero,
see Section~\ref{sec:zero-grad}).

We reduced the consistency problem for (\ref{eq:nonstrict}) to the following:
compute gradients of the polynomial $R_m$ at the points
$x^{(1)}, \ldots , x^{(r)}$, where $x^{(i)}$ belongs to the
hyperplane $\eta_{0}^{(i)}U_0 + \cdots + \eta_{n}^{(i)}U_n=0$,
and does not belong to any other hyperplane.
We shall address the problem of constructing $R_m$ in Section~\ref{sec:computing-u} below.
Now let us describe the rest of the algorithm.

We shall obtain points $x^{(1)}, \ldots , x^{(r)}$ as the points
of intersections of a ``generic'' straight line in ${\mathbb R}^{n+1}$ with $\{ R_m=0 \}$.
Intuitively, {\it almost every} straight line intersects each member of
our family of hyperplanes at exactly one point, and all points are distinct.

Suppose that we have found the generic line in a parametric
form $\alpha t + \beta$, where $t$ is a new variable, and
$\alpha, \beta \in {\mathbb R}^{n+1}$.
Thus, a point on the line has coordinates $\alpha_0 t' + \beta_0,
\ldots , \alpha_n t' + \beta_n$ for a value $t' \in {\mathbb R}$ of the parameter $t$.
The roots of
$$R_m(\alpha t + \beta)=0$$
(w.r.t. the variable $t$) correspond to points of intersection of the line with the hyperplanes.
Now Corollary~\ref{cor:limit} implies that if (\ref{eq:nonstrict}) is consistent, then
for a root $t'$ of $R_m( \alpha t + \beta)=0$ the point
$$\biggl( \frac{\partial R_m /\partial U_1}{\partial R_m / \partial U_0}
(\alpha t' + \beta), \ldots , \frac{\partial R_m / \partial U_n}
{\partial R_m / \partial U_0} (\alpha t' + \beta) \biggr)$$
is a solution to (\ref{eq:nonstrict}).

Denoting
$$\frac{\partial R_m / \partial U_j}{\partial R_m /
\partial U_0}(\alpha t + \beta)= y_j(\alpha t + \beta),\> 1 \le j \le n,$$
we get:
\medskip

\noindent {\em The system (\ref{eq:nonstrict}) is consistent if and only if there exists
$t' \in {\bf R}$ such that
$$R_m(\alpha t' + \beta)=0\quad {\rm and}$$
\begin{equation}\label{eq:one-variable}
f_i \bigl( y_1(\alpha t' + \beta), \ldots , y_n(\alpha t' + \beta) \bigr)
 \ge 0
 \end{equation}
$${\rm for}\> {\rm all}\> 1 \le i \le k.$$}
\medskip

The system (\ref{eq:one-variable}) is a system of polynomial inequalities in {\it one} variable.
We can check its consistency by the algorithm of Ben-Or, Kozen and Reif.

\subsection{Case of a zero gradient}\label{sec:zero-grad}

In Section~\ref{sec:algorithm-modulo} we used the gradients of $R_m$ at points $x$ of intersection
of $\{ R_m=0 \}$ with a generic line, i.e., vectors of the kind
$$\biggl( \frac{\partial R_m}{\partial U_0}(x), \ldots ,
\frac{\partial R_m}{\partial U_n}(x) \biggr),$$
silently assuming that these vectors are nonzero.
This is a simplification of the actual procedure, since the gradient
{\it may be} zero at a chosen point.
This happens exactly when a factor
$\eta_{0}^{(i)}U_0 + \cdots + \eta_{n}^{(i)}U_n$ occurs in $R_m$ with
a power greater than 1.
In terms of $R_m(\alpha t + \beta) $ it means that a root $t'$,
corresponding to an intersection of the straight line with $\{ R_m=0 \}$,
is {\it multiple}.

\begin{lemma}
$$ \eta_{j}^{(i)}= \frac{\partial^p} {\partial s \partial^{p-1} t}
R_m(\alpha t + se_j + \beta) \Big\arrowvert_{t=t',s=0}$$
for a certain $p,\> 1 \le p \le r$, where
$e_j=(0, \ldots , 0,1,0, \ldots , 0)$ (1 occurs at $j$th position).
\end{lemma}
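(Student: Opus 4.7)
The plan is to exploit the explicit linear factorization of $R_m$ given by Lemma~\ref{le:u-res-m} and differentiate the restriction of $R_m$ to the affine $2$-plane spanned by $\alpha$ (the direction of the generic line) and $e_j$ (which lets us read off the $j$th projective coordinate of the normal).

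First, group the $r$ linear factors of $R_m$ by distinct hyperplanes: write $R_m = \prod_\ell L_\ell^{m_\ell}$, where $L_\ell(U) = \sum_q \eta_q^{(\ell)} U_q$ are pairwise non-proportional and $\sum_\ell m_\ell = r$. Substituting $U = \alpha t + s e_j + \beta$ and noting $L_\ell(e_j) = \eta_j^{(\ell)}$ gives
\begin{equation*}
R_m(\alpha t + s e_j + \beta) \;=\; \prod_\ell \bigl(L_\ell(\alpha)\,t + L_\ell(\beta) + s\,\eta_j^{(\ell)}\bigr)^{m_\ell}.
\end{equation*}
By genericity of the line, the point $\alpha t' + \beta$ lies on exactly one hyperplane, say $\{L_i = 0\}$, and $L_i(\alpha) \ne 0$ (otherwise the line would be parallel to $\{L_i=0\}$ and never meet it). I take $p = m_i$; this also equals the multiplicity of $t'$ as a root of the univariate polynomial $R_m(\alpha t + \beta)$, so it can be detected computationally by differentiating in $t$ alone.

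The central computation is to split $R_m(\alpha t + se_j + \beta) = u^p H$, where $u(t,s) = L_i(\alpha)(t-t') + s\,\eta_j^{(i)}$ and $H(t,s) = \prod_{\ell \ne i}\bigl(L_\ell(\alpha)t + L_\ell(\beta) + s\,\eta_j^{(\ell)}\bigr)^{m_\ell}$ is nonvanishing at $(t',0)$. Since $u$ is linear in both variables, a brief Leibniz expansion shows that $\partial_t^k(u^p)$ vanishes at $(t',0)$ for every $k \le p-1$, while $\partial_s\partial_t^k(u^p)$ vanishes there for $k < p-1$ and equals $p!\,L_i(\alpha)^{p-1}\,\eta_j^{(i)}$ when $k = p-1$. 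Collecting the terms in the Leibniz rule for $\partial_s\partial_t^{p-1}(u^p H)$ at $(t',0)$ yields
\begin{equation*}
\partial_s \partial_t^{p-1} R_m(\alpha t + s e_j + \beta)\Big|_{t=t',\,s=0} \;=\; C\,\eta_j^{(i)},
\end{equation*}
with $C = p!\,L_i(\alpha)^{p-1}\,H(t',0)$ a nonzero scalar independent of $j$. As $j$ ranges over $0,\ldots,n$, these derivatives recover the vector $(\eta_0^{(i)},\ldots,\eta_n^{(i)})$ up to the common nonzero factor $C$, which is the asserted equality (projective coordinates being defined only up to overall scaling).

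The main obstacle is ensuring the genericity of $\alpha,\beta$ needed throughout: the line must meet each hyperplane $\{L_\ell=0\}$ at a distinct point, each intersection must be transverse (so that $L_\ell(\alpha)\ne 0$), and no intersection point should lie on two distinct hyperplanes. These are exactly the nondegeneracy conditions already imposed in Section~\ref{sec:algorithm-modulo} and are satisfied outside a proper algebraic subset of the space of pairs $(\alpha,\beta)$, so no new difficulty arises beyond what the surrounding algorithm already assumes.
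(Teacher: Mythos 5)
Your proposal is correct and fills in exactly the kind of computation the paper dismisses as ``direct.'' Writing $R_m$ as a product of linear forms grouped by multiplicity, restricting to the plane $\alpha t + s e_j + \beta$, factoring out the unique vanishing factor $u = L_i(\alpha)(t-t') + s\,\eta_j^{(i)}$ to the power $p = m_i$, and applying Leibniz to $\partial_s\partial_t^{p-1}(u^p H)$ at $(t',0)$ is the natural route; the genericity hypotheses you invoke (transversality, no point on two hyperplanes, $L_i(\alpha)\neq 0$) are precisely those secured by the choice of $\alpha,\beta$ in the surrounding section. One point worth making explicit, which you do and the paper does not: the derivative recovers $\eta_j^{(i)}$ only up to the nonzero constant $C = p!\,L_i(\alpha)^{p-1}H(t',0)$ that is independent of $j$, so the lemma's literal equality should be read as an equality of projective (homogeneous) coordinate vectors, which is all the algorithm needs.
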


\begin{proof}
By direct computation.
\end{proof}

\subsection{Computing a generic straight line in ${\mathbb R}^{n+1}$}

Let
$$S_1= \{ (1,i,i^2, \ldots ,i^n):\quad 1 \le i \le nr+1 \};$$
$$S_2= \{ (1,i,i^2, \ldots ,i^n):\quad 1 \le i \le nr(r-1)/2 +1 \}.$$

\begin{lemma}
There exist $\alpha \in S_1$ and $\beta \in S_2$
such that:
\medskip

\noindent (1)\quad $\{ \alpha t + \beta \}$ intersects every
$\{ \eta_{0}^{(i)}U_0 + \cdots + \eta_{n}^{(i)}U_n=0 \} $ at exactly
one point, say, $\alpha t^{(i)} + \beta$;
\medskip

\noindent (2)\quad if $t^{(i)}=t^{(j)}$ for $i \neq j$ then
$$\frac{ \eta_{l}^{(i)}}{\eta_{0}^{(i)} } =
\frac{ \eta_{l}^{(j)}}{\eta_{0}^{(j)} }$$
for all $1 \le l \le n$.
\end{lemma}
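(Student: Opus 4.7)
My plan is a standard Vandermonde/counting argument: the special structure of $S_1$ and $S_2$ as sets of points on the moment curve $k \mapsto (1, k, k^2, \ldots, k^n)$ will guarantee that the relevant linear forms, evaluated along the curve, give nonzero univariate polynomials of degree at most $n$, and then I just union-bound over the finitely many pairs.

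First I would handle (1). Writing $\alpha = (1, k, k^2, \ldots, k^n)$ for an integer parameter $k$, the line $\{\alpha t + \beta\}$ meets the hyperplane $H_i = \{\eta_0^{(i)} U_0 + \cdots + \eta_n^{(i)} U_n = 0\}$ in exactly one point iff $\eta^{(i)} \cdot \alpha \ne 0$, i.e.\ iff the polynomial
\[
p_i(k) := \eta_0^{(i)} + \eta_1^{(i)} k + \eta_2^{(i)} k^2 + \cdots + \eta_n^{(i)} k^n
\]
is nonzero at that value of $k$. For each $i$ with $\eta^{(i)} \ne 0$, $p_i$ is a nonzero polynomial of degree at most $n$, hence has at most $n$ roots. (For the finitely many zero $\eta^{(i)}$'s allowed by Lemma~\ref{le:u-res-m}, the corresponding factor does not appear in $R_m$ and can be ignored.) The total set of forbidden $k$-values has cardinality at most $nr$, so among the $nr+1$ integer parameter values defining $S_1$ at least one choice $\alpha \in S_1$ works for all $i$ simultaneously.

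Next I would fix such an $\alpha$ and address (2). For $i \ne j$ with $H_i \ne H_j$ (meaning $\eta^{(i)}$ and $\eta^{(j)}$ are not proportional), I want $t^{(i)} \ne t^{(j)}$, where $t^{(i)} = -(\eta^{(i)} \cdot \beta)/(\eta^{(i)} \cdot \alpha)$. Setting $\beta = (1, k, k^2, \ldots, k^n)$, the collision condition $t^{(i)} = t^{(j)}$ amounts to the vanishing of
\[
q_{ij}(k) := (\eta^{(j)} \cdot \alpha)\,(\eta^{(i)} \cdot \beta(k)) - (\eta^{(i)} \cdot \alpha)\,(\eta^{(j)} \cdot \beta(k)) = \sum_{l=0}^{n} k^l\bigl[(\eta^{(j)}\cdot\alpha)\eta_l^{(i)} - (\eta^{(i)}\cdot\alpha)\eta_l^{(j)}\bigr].
\]
I will observe that $q_{ij} \equiv 0$ would force $(\eta^{(j)}\cdot\alpha)\eta_l^{(i)} = (\eta^{(i)}\cdot\alpha)\eta_l^{(j)}$ for every $l$, and since both inner products with $\alpha$ are nonzero by step (1), this would make $\eta^{(i)}$ and $\eta^{(j)}$ proportional, contradicting $H_i \ne H_j$. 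Therefore $q_{ij}$ is a nonzero polynomial of degree at most $n$, contributing at most $n$ forbidden values of $k$.

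Finally I would conclude by counting: there are at most $\binom{r}{2} = r(r-1)/2$ pairs with $H_i \ne H_j$, so the union of forbidden $k$-values has size at most $nr(r-1)/2$. Since $S_2$ contains $nr(r-1)/2 + 1$ candidates, at least one $\beta \in S_2$ avoids all collisions, establishing (2) without disturbing (1). The honestly routine nature of the argument is the main feature here; if anything is a subtle point, it is bookkeeping in (2) that the contrapositive is exactly what is needed (distinct projective hyperplanes $\Leftrightarrow$ the normalized tuples $(\eta_l^{(\cdot)}/\eta_0^{(\cdot)})_{l=1}^n$ differ), and that the zero $\eta^{(i)}$-vectors permitted in Lemma~\ref{le:u-res-m} contribute no actual hyperplane and thus may be discarded from the counting throughout.
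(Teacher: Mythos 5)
Your proof is correct and takes essentially the same route as the paper: both rest on the Vandermonde/moment-curve fact that a nonzero linear form vanishes at no more than $n$ of the points $(1,k,\ldots,k^n)$, combined with a union bound over the $r$ hyperplanes for step (1) and over the $\binom{r}{2}$ pairs for step (2). The only cosmetic difference is that you make the counting explicit by writing out the univariate polynomials $p_i(k)$ and $q_{ij}(k)$ directly, whereas the paper phrases the same count geometrically (any $n+1$ of the moment-curve points are linearly independent, so a hyperplane can absorb at most $n$ of them, and a pigeonhole finishes); your step (2) also nicely short-circuits the paper's dimension bookkeeping for $H_1$ and $H_2$ by computing $q_{ij}$ and checking non-vanishing outright.
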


\begin{proof}
Each subset of $n+1$ vectors from $S_1$
or from $S_2$ is linearly
independent (they form a famous {\it Vandermonde matrix}).
Then at least one $\alpha \in S_1$ does not belong to
$$\{ R_m =0 \} = \bigcup_{1 \le i \le r} \{  \eta_{0}^{(i)}U_0 + \cdots +
\eta_{n}^{(i)}U_n \}.$$

Indeed, let {\it all} points $\alpha^{(i)}=(1,i,i^2, \ldots ,i^n) \in S_1$ belong to $\{ R_m=0 \}$.
Then, denoting $\eta^{(i)}=( \eta_{0}^{(i)}, \ldots , \eta_{n}^{(i)})$, we get:
$$\alpha^{(1)} \eta^{(1)}=0\quad {\rm or}\quad  \alpha^{(1)} \eta^{(2)}=0\quad
{\rm or}\quad \cdots\quad  {\rm or}\quad \alpha^{(1)} \eta^{(r)}=0$$
$${\rm and}$$
$$\alpha^{(2)} \eta^{(1)}=0\quad {\rm or}\quad \alpha^{(2)} \eta^{(2)}=0\quad
{\rm or}\quad \cdots\quad {\rm or}\quad \alpha^{(2)} \eta^{(r)}=0$$
$${\rm and}$$
$$\cdots\quad \cdots\quad \cdots$$
$$\alpha^{(nr+1)} \eta^{(1)}=0\quad {\rm or}\quad \alpha^{(nr+1)} \eta^{(2)}=0\quad
{\rm or}\quad \cdots\quad {\rm or}\quad \alpha^{(nr+1)} \eta^{(r)}=0.$$

It follows that there exist $i_0$ and pair-wise distinct $j_1, \ldots , j_{n+1}$ such that
$$
\alpha^{(j_1)} \eta^{(i_0)}=0\quad {\rm and}\quad \alpha^{(j_2)} \eta^{(i_0)}=0\quad {\rm and}\quad
\ldots\quad {\rm and}\quad \alpha^{(j_{n+1})} \eta^{(i_0)}=0,
$$
i.e., vectors $\alpha^{(j_1)}, \ldots , \alpha^{(j_{n+1})}$ are linearly
dependent. We have got a contradiction.

It follows that for any $\beta$, (1) is satisfied.
Now let us choose $\beta$ so that (2) would be satisfied.

Let $H_1 \subset {\mathbb R}^{n+1}$ be the set of all points contained
in two distinct hyperplanes.
For any two distinct hyperplanes $P_1, P_2$ the dimension
$\dim (P_1 \cap P_2)= n-1$ (recall that each hyperplane passes through
the origin).
The number of all pairs of hyperplanes is $\binom{r}{2}= r(r-1)/2$.
Hence $H_1$ is contained in a union of $r(r-1)/2$
planes of the dimension $n-1$.
It follows that the set
$$H_2= \{ \beta \in {\mathbb R}^{n+1}:\quad \alpha t + \beta \in H_1\>
{\rm for}\> {\rm some}\> t \}$$
is contained in a union of $r(r-1)/2$ planes of the dimension $n$ (i.e.,
hyperplanes).
Reasoning as before, we get that there exists $\beta \in S_2$
such that $\beta \not\in H_2$.
\end{proof}

\subsection{Computing $u$-resultant}\label{sec:computing-u}

In this section we shall discuss briefly the problem of computing
the $u$-resultant.
It's more convenient to do that for a general system of homogeneous
equations (rather than for the concrete system (\ref{eq:homogen}).
The construction will be ``approximately correct'', that is, we shall ignore
some technical details.

We start with a system of homogeneous equations
\begin{equation}\label{eq:general}
g_1= \cdots = g_n=0
\end{equation}
in variables $X_0,X_1, \ldots , X_n$.
Let $\deg(g_i)=d_i$ and $D= \sum_{1 \le i \le n} d_i$.
Introduce new variables $U_0,U_1, \ldots , U_n$ and
construct a new polynomial
$$g_0 \equiv U_0X_0 + U_1X_1 + \cdots +U_nX_n.$$
The degree $\deg (g_0)=d_0=1$.

Observe that each homogeneous polynomial $h$ can be uniquely defined by
a vector of its coefficients (that is, we must fix a certain order of all
possible monomials and take coefficients in that order).
Thus, we can view the set of all polynomials of degree less or equal to $d$
as a vector space of the dimension $\dim = \binom{n+d}{n}$.
Identically zero polynomial plays a role of the zero in that space.

Let ${\mathcal H}_i$ be the vector space of all homogeneous polynomials of degree
$D-d_i$ ($0 \le i \le n$), and ${\mathcal H}$ be the vector space of all
polynomials of the degree $D$.

Consider the direct sum ${\mathcal H}_0 \times \cdots \times {\mathcal H}_n$
which is again the a vector space.
Its elements are vectors of spaces ${\mathcal H}_i$, joined end-to-end.
Observe that
$$\dim ({\mathcal H}_0 \times \cdots \times {\mathcal H}_n)=
\sum_i \binom{n+D-d_i}{n};\quad \dim ({\mathcal H})= \binom{n+D}{n}.$$

Define the linear map from ${\mathcal H}_0 \times \cdots \times {\mathcal H}_n$
to ${\mathcal H}$ by the following formula:
$$(h_0, h_1, \ldots , h_n) \longrightarrow g_0h_0 + \cdots + g_nh_n.$$
The matrix $M$ of this map is of the order
$$\binom{n+D}{ n} \times \sum_i \binom{n+D-d_i}{n}.$$

\begin{lemma}

\noindent (1)\quad The system (\ref{eq:general}) has a finite number of roots if and only if
at least one of the $\binom{n+D}{n} \times \binom{n+D}{ n}$
minors of $M$ is nonsingular;
\medskip

\noindent (2)\quad If $M'$ is a maximal nonsingular minor of $M$, then
$$\det(M')= \prod_i (\xi_{0}^{(i)}U_0 + \cdots +
\xi_{n}^{(i)}U_n), $$
where $ (\xi_{0}^{(i)}: \cdots : \xi_{n}^{(i)})$ are all the roots of
the system (\ref{eq:general}).
\end{lemma}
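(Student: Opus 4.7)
The plan is to prove both parts by identifying the linear map $\phi$ with matrix $M$ (sending $(h_0, \ldots , h_n) \mapsto g_0 h_0 + \cdots + g_n h_n$) with the degree-$D$ truncation of multiplication from the ideal $(g_0, g_1, \ldots , g_n)$, and then to invoke the projective Nullstellensatz together with the classical Macaulay degree bound. The parameters $D = \sum d_i$ and $\mathcal{H}_i = \{$forms of degree $D - d_i\}$ are tuned so that this degree-$D$ truncation is precisely Macaulay's matrix.

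For part (1), I would establish a chain of equivalences. First, $M$ has a nonsingular maximal minor iff $M$ has full row rank over the field ${\mathbb C}(U_0, \ldots , U_n)$, iff $\phi$ is surjective after a generic specialization $U = u$. Second, the image of the specialized map $\phi_u$ is exactly the degree-$D$ component of the ideal $(u_0X_0 + \cdots + u_nX_n, g_1, \ldots , g_n)$: any cofactor $\tilde h_i$ can be replaced by its homogeneous part of degree $D - d_i$ without altering the degree-$D$ output, so nothing is lost. Third, the projective Nullstellensatz, applied in degree $D = \sum d_i$ (which sits above Macaulay's critical degree $\sum (d_i - 1)$), tells us that the ideal contains every homogeneous polynomial of degree $D$ iff the system $u \cdot X = g_1 = \cdots = g_n = 0$ has no common zero in ${\mathbb P}^n({\mathbb C})$. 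Finally, for generic $u$ the linear form $u \cdot X$ avoids all finite common zeros of $g_1, \ldots , g_n$, while any hyperplane in ${\mathbb P}^n$ meets any subvariety of positive dimension; hence $\phi_u$ is surjective for generic $u$ iff the variety $\{g_1 = \cdots = g_n = 0\}$ is finite.

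For part (2), let $M'$ be a maximal nonsingular minor, so $\det(M')$ is a nonzero polynomial in ${\mathbb C}[U_0, \ldots , U_n]$. I claim that every linear form $L_i(U) = \xi_0^{(i)} U_0 + \cdots + \xi_n^{(i)} U_n$ divides $\det(M')$. Indeed, if $u$ satisfies $L_i(u) = 0$, then $g_0|_u$ vanishes at $\xi^{(i)}$, so $\xi^{(i)}$ is a common zero of $g_0|_u, g_1, \ldots , g_n$ in ${\mathbb P}^n({\mathbb C})$; by the argument in part (1), $\phi_u$ then fails to be surjective, so $M(u)$ has deficient row rank and every maximal minor, including $M'(u)$, has vanishing determinant. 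Hence $\det(M'(u)) = 0$ on each hyperplane $\{L_i = 0\}$, so $L_i \mid \det(M')$; since the $L_i$ correspond to distinct points of ${\mathbb P}^n$ they are pairwise non-proportional, and therefore $\prod_{1 \le i \le r} L_i$ divides $\det(M')$.

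The main obstacle is upgrading this divisibility to equality by matching degrees. By construction $g_0$ is the only polynomial carrying $U$-dependence, and it is linear in $U$, so $\det(M')$ is homogeneous in $U$ of degree equal to the number of columns of $M'$ drawn from the $\mathcal{H}_0$ block. A standard dimension count for Macaulay's matrix shows this number equals the Bezout number $r = d_1 \cdots d_n$, which also matches the $U$-degree of $\prod_i L_i$ guaranteed by Lemma~\ref{le:u-res}. Consequently $\det(M') = c \prod_i L_i$ for some nonzero constant $c \in {\mathbb C}$, and after the customary normalization $c = 1$. Working out this degree bookkeeping explicitly — together with the verification that the $M'$ chosen is the specific Macaulay block producing the $u$-resultant on the nose rather than up to an extraneous factor — is the only genuinely technical step; the projective-Nullstellensatz half of the argument is the conceptual engine.
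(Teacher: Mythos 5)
The paper gives only a ``Hint for the proof,'' and that hint covers exactly one thing: if $\xi^{(i)}$ is a root of the system and one specializes $U$ to lie on the hyperplane $\xi_0^{(i)}U_0+\cdots+\xi_n^{(i)}U_n=0$, then every polynomial in the image of the specialized map vanishes at $\xi^{(i)}$, so the map is not onto, every maximal minor's determinant vanishes, and hence each linear form $L_i=\xi_0^{(i)}U_0+\cdots+\xi_n^{(i)}U_n$ divides $\det(M')$. Your divisibility argument for part (2) is this same argument, stated slightly more carefully. Your part (1), via identifying the image of a specialized map with the degree-$D$ graded piece of the ideal, invoking Macaulay's bound (indeed $D=\sum d_i$ exceeds the critical degree $\sum(d_i-1)+1 = D-n+1$), and then passing the ``iff'' through a generic hyperplane section, is a genuine argument that the paper does not attempt, and it is sound. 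So your proposal goes substantially further than the text.

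The gap is in the degree-matching step of part (2). You assert that a ``standard dimension count for Macaulay's matrix shows [the number of $\mathcal{H}_0$-columns of $M'$] equals the Bezout number $r=d_1\cdots d_n$.'' The rank argument actually only gives one inequality: the $U$-independent columns of $M$ span a space of dimension $\binom{n+D}{n}-r$ (the degree-$D$ piece of $(g_1,\ldots,g_n)$ has codimension $r$ once $D$ exceeds the regularity $\sum(d_i-1)$), so a nonsingular $\binom{n+D}{n}\times\binom{n+D}{n}$ minor must take at least $r$ columns from $\mathcal{H}_0$. Nothing forces it to take exactly $r$. A concrete counterexample with $n=2$, $g_1=X_1$, $g_2=X_2$ (so $D=2$, $r=1$, $\prod L_i=U_0$): the maximal minor on columns $\{g_0X_0,\,g_0X_1,\,X_0X_1,\,X_1^2,\,X_0X_2,\,X_2^2\}$ of the $6\times 9$ Macaulay matrix has determinant $U_0U_2$, which is nonsingular over $\mathbb{C}(U)$, has two $\mathcal{H}_0$-columns, and carries the $U$-dependent extraneous factor $U_2$. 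So the conclusion $\det(M')=c\prod_iL_i$ is simply false for an arbitrary maximal nonsingular minor. You do gesture at this (``the verification that the $M'$ chosen is the specific Macaulay block producing the $u$-resultant on the nose''), but that caveat contradicts your preceding claim that any $M'$ already has $r$ columns from $\mathcal{H}_0$. The honest statement, and the one the paper tacitly adopts by labeling the whole construction ``approximately correct,'' is that $\prod_i L_i$ divides $\det(M')$, that the quotient is an extraneous factor, and that with a specific (Macaulay) choice of $M'$---or after dividing by a companion minor in Macaulay's manner---the extraneous factor is eliminated. If you want to make the degree argument rigorous you must either pin down that specific $M'$ or settle for divisibility plus an extraneous factor, as the paper does.
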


\begin{proof}[Hint for the proof]
Let (\ref{eq:general}) have finite number of roots
and let $ (\xi_{0}^{(i)}: \cdots : \xi_{n}^{(i)}) $ be one of them.
Then, by the definition of the map $M$,
for any vector $h_0, h_1, \ldots h_n$ the value of the homogeneous
polynomial $M(h_0, \ldots , h_n)$ at the point
$ (\xi_{0}^{(i)}: \cdots : \xi_{n}^{(i)}) $ is zero.
Hence for the specializations of $U_0, \ldots , U_n$ for which
$\xi_{0}^{(i)}U_0+ \cdots + \xi_{n}^{(i)}U_n=0$, the map {\it is not
an isomorphism}.
Indeed, for this specialization every image under the map is a
homogeneous polynomial having $ (\xi_{0}^{(i)}: \cdots : \xi_{n}^{(i)}) $
as a root.
Not every polynomial of the degree $D$ has this property.

It follows that all maximal minors of $M$ (for the specialization of
$U_0, \ldots , U_n$) vanish, in particular, $\det (M')$ vanishes.
In other words: polynomial $\det (M')$ vanishes on the hyperplane
$\{ \xi_{0}^{(i)}U_0 + \cdots + \xi_{n}^{(i)}U_n=0 \}$.
It follows that  $\xi_{0}^{(i)}U_0 + \cdots + \xi_{n}^{(i)}U_n$
divides $\det (M')$.
\end{proof}

\subsection{Complexity of deciding consistency}

A straightforward observation shows that the complexity of the described algorithm is $(kd)^{O(n)}M^{O(1)}$
for the Turing machine model and $(kd)^{O(n)}$ for the real number model.
Here the term $kd$ appears as the degree of polynomial $g(\eps)$ in Section~\ref{sec:epsilon}.

\section{Quantifier elimination}

Here we briefly sketch main ideas behind the quantifier elimination algorithm.
To make formulas simpler we will work in the real number model.

\subsection{Formulation of the problem}

We first consider the problem of elimination one block of existential quantifiers, i.e., representing
a set
$$
\left\{ \exists X_{n_1} \exists X_{n_1+1} \cdots \exists X_n \left(
f_1 (X_1, \ldots , X_n) \ge 0 , \ldots , f_k (X_1, \ldots , X_n) \ge 0 \right) \right\}
\subset {\mathbb R}^{n_1-1}
$$
by a Boolean combination of polynomial equations and inequalities.

Changing, for convenience, the notations we write:
$$
X=(X_{n_1}, \ldots ,X_n),\quad Y=(X_1, \ldots , X_{n_1-1}),
$$
$$
F(X_1, \ldots , X_n)=f_1 (X_1, \ldots , X_n) \ge 0 , \ldots , f_k (X_1, \ldots , X_n) \ge 0,
$$
thus the input formula becomes $\exists X\ F(Y)$.

\subsection{Main idea}

Consider free variables $Y$ as {\em parameters} and try to execute the
decidability algorithm for a parametric formula (i.e., with variable coefficients).

Decidability algorithm from Section~\ref{sec:systems} uses only arithmetic operations $\pm, \times, \div$,
and comparisons over real numbers.
Arithmetic can be easily performed parametrically (it is arithmetic of polynomials),
while comparisons require branching.

The parametric algorithm can be represented by {\em algebraic decision tree}.

The tree is described as follows.
Root of the tree is associated with an arithmetic operation on some variables in the variable vector $Y$.
Proceeding by induction, at each vertex of the tree, except leaves, an arithmetic operation
is performed between two elements of the set consisting of variables in $Y$ and the results of operations
performed along the branch leading from the root to this vertex.
As a result, this vertex is associated with a polynomial in variables in $Y$, which is
a composition of arithmetic operations performed along the branch.

Upon a {\em specialization} of $Y$, the polynomial at each vertex is evaluated, and the value is compared to zero.
Depending on the result of comparison, one of the three branches is chosen,
thereby determining the next vertex, where the next polynomial evaluation will be made,
and so on, until a leaf is reached.

Each leaf is assigned the value {\bf True} or {\bf False}.
All specializations of $Y$, arriving at a leaf marked {\bf True}, correspond to closed formulas that are true.
Similar with {\bf False}.

\subsection{Elimination algorithm}

\begin{itemize}
\item
Use the algorithm from Section~\ref{sec:systems} parametrically to obtain a decision tree
with the variable input $Y$.
\item
Determine all branches from the root to leaves marked {\bf True}.
\item
Take conjunction of the inequalities along each such branch:
$$\bigwedge_{i \in {\rm branch}}h_i(Y) \sigma_i 0,$$
where $\sigma_i \in \{ <,>,= \}$.
\item
$$\exists X\ F(X,Y) \Leftrightarrow \bigvee_{\rm branches}\quad
\bigwedge_{i \in {\rm branch}}h_i(Y) \sigma_i 0$$
\end{itemize}

\subsection{Complexity of elimination algorithm}

The complexity of the consistency algorithm from Section~\ref{sec:systems} coincides with {\em height} of the tree
(the length of the longest branch from the root to a leaf).
From the first glance, this leads to a bad complexity for elimination algorithm,
because there are too many branches (hence, leaves) in the tree.
Indeed, the height equals to complexity of consistency, which is $(kd)^{O(n)}$.
It follows that combinatorially there may be as much as $3^{(kd)^{O(n)}}$ leaves
(recall that we take the union of leaves marked {\bf True} in the elimination algorithm).
However, we will now show that most branches are not followed under any specialization of $Y$, i.e.,
most leaves correspond to Boolean formulas defining $\emptyset$.

Let us first generalize Definition~\ref{def:css} to the case of many variables.

Let $f_1, \ldots , f_k \in \Real[X_1, \ldots , X_n]$ with degrees
$\deg (f_i)< d$ for all $i,\> 1 \le i \le k$.

\begin{definition}
Consistent sign assignment for $f_1, \ldots , f_k$ is a string
$$\sigma = ( \sigma_1, \ldots , \sigma_k ),$$
where $\sigma_i \in \{ >,<,= \}$, such that the system
$f_1 \sigma_1 0, \ldots , f_k \sigma_k 0$ has a root in $\Real^n$.
\end{definition}

\begin{lemma}\label{le:sa}
The number of distinct consistent sign assignments for $f_1, \ldots , f_k$
is at most $(kd)^{O(n)}$, where $\deg (f_i) <d$ for all $i \in \{ 1, \ldots ,k \}$.
\end{lemma}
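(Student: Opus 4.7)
The plan is to reduce the count to a bound on the number of connected components of the complement of a real hypersurface, via a perturbation trick, and then invoke the classical Oleinik--Petrovsky--Milnor--Thom bound on Betti numbers of real algebraic sets.

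First I would handle the special case of \emph{strict} sign assignments, where every $\sigma_i \in \{<, >\}$. For such $\sigma$, the realization set $\{x \in \Real^n : f_1(x)\sigma_1 0, \ldots, f_k(x)\sigma_k 0\}$ is open, and distinct strict sign assignments yield disjoint open realization sets. Each nonempty realization set is a union of connected components of $\Real^n \setminus \{f_1 \cdots f_k = 0\}$, so the number of consistent strict sign assignments is bounded by the number of connected components of this complement. Since $\deg(f_1 \cdots f_k) < kd$, the Oleinik--Petrovsky--Milnor--Thom bound gives $(kd)^{O(n)}$.

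Next I would reduce an arbitrary consistent sign assignment $\sigma$ (possibly containing ``$=$'') to a strict one for a slightly perturbed family. Pick a witness point $x^{(\sigma)} \in \Real^n$ for each consistent $\sigma$. Since there are only finitely many such $\sigma$, one can choose a single $\varepsilon > 0$ smaller than $|f_i(x^{(\sigma)})|$ over all $\sigma$ and all $i$ with $\sigma_i \neq \,$``$=$''. Consider the enlarged family of $2k$ polynomials
$$\{f_i + \varepsilon,\ f_i - \varepsilon : 1 \le i \le k\},$$
each still of degree less than $d$. Evaluating this family at $x^{(\sigma)}$ produces a strict sign assignment $\tau^{(\sigma)}$: if $\sigma_i = \,$``$=$'' then $f_i(x^{(\sigma)}) + \varepsilon = \varepsilon > 0$ and $f_i(x^{(\sigma)}) - \varepsilon = -\varepsilon < 0$; if $\sigma_i \in \{<,>\}$ then both $f_i(x^{(\sigma)}) \pm \varepsilon$ retain the sign of $f_i(x^{(\sigma)})$ by the choice of $\varepsilon$. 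The pair of signs of $f_i(x^{(\sigma)}) + \varepsilon$ and $f_i(x^{(\sigma)}) - \varepsilon$ determines $\sigma_i$, so the map $\sigma \mapsto \tau^{(\sigma)}$ is injective. Applying the first step to the enlarged family of $2k$ polynomials of degree $<d$ gives $(2kd)^{O(n)} = (kd)^{O(n)}$ strict sign assignments, and hence the same bound on the number of consistent sign assignments of the original family.

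The hard part of the argument is the Oleinik--Petrovsky--Milnor--Thom bound itself, a nontrivial theorem of real algebraic geometry which I would invoke as a black box, in the spirit of these lecture notes.
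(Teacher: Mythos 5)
Your proof is correct and takes essentially the same route as the paper's: both pick one witness point per consistent sign assignment, choose $\varepsilon$ small enough to shift all nonzero values away from zero, reduce to counting connected components of the complement of the zero set of $\prod_i(f_i+\varepsilon)(f_i-\varepsilon)$ (the paper writes this as $\{\prod_i(f_i+\varepsilon)^2(f_i-\varepsilon)^2>0\}$), and invoke the Oleinik--Petrovsky--Milnor--Thom bound. The only difference is cosmetic: you spell out carefully the injection $\sigma\mapsto\tau^{(\sigma)}$ into strict sign assignments of the enlarged family, a step the paper labels ``easy to prove,'' and you omit the paper's optional second argument that bounds the component count by running the consistency algorithm on $\{h=\delta\}$.
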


\begin{proof}
Choose in every $\{ f_1 \sigma_1 0, \ldots , f_k \sigma_k 0 \}$ one arbitrary point.
There exists a positive $\eps \in \Real$ such that for every chosen point $x$ and
every $i \in \{ 1, \ldots ,k \}$ the inequality $f_i(x)>0$ implies $f_i(x)> \eps$, and
$f_i(x)< 0$ implies $f_i(x)< -\eps$.
It is easy to prove that the number of distinct consistent sign assignments does not exceed
the number of connected components of the semialgebraic set
$$
S= \left\{ h= \prod_{1 \le i \le k} (f_i+ \eps)^2(f_i- \eps)^2 >0 \right\}.
$$
In turn, according to the famous Thom-Milnor bound on homologies of semialgebraic sets,
this number does not exceed $(kd)^{O(n)}$.

Alternatively, for a small enough positive $\delta < \eps$, the number of connected components of $S$
does not exceed the number of connected components of $\{ h= \delta \}$.
A careful examination of the consistency algorithm from Section~\ref{sec:systems} shows that, being applied to
the equality $h= \delta$, it will produce among all of its roots
$$(\eta_{1}^{(i)}/\eta_{0}^{(i)}, \ldots , \eta_{n}^{(i)}/\eta_{0}^{(i)})$$
a representative on each connected component of $\{ h= \delta \}$.
The upper bound on the number of roots follows from the complexity of the consistency algorithm.
\end{proof}

Carefully examining the decidability algorithm we can see that the number
of distinct polynomials $h_i$ in the tree is essentially the same as
in one branch, i.e., $(kd)^{O(n)}$.
From the complexity estimate we know that their degrees are $(kd)^{O(n)}$.
Then, by Lemma~\ref{le:sa}, the number of distinct non-empty sets associated
with leaves is $(kd)^{O(n^2)}$.
We conclude that the complexity of $\exists$-elimination is $(sd)^{O(n^2)}$.

\subsection{Universal quantifier and quantifier alternation}

To eliminate the universal quantifier in $\forall X\ F(X,Y)$ the algorithm first re-writes
this formula in an equivalent form $\neg \exists X \neg F(X,Y)$, then eliminates $\exists$
in $\exists X \neg F(X,Y)$,
as above, and finally re-writes the resulting Boolean combination of equations and inequalities
without the negation symbol.

Finally, consider the general case of a formula
$$(Q_1 X^{(1)}) \cdots (Q_q X^{(q)}) P(Y, X^{(1)}, \ldots , X^{(q)}),$$
where
\begin{itemize}
\item
$Q_1, \ldots , Q_q$ are quantifiers $\exists$ or $\forall$;

\item
$X^{(i)}=(X_{1}^{(i)}, \ldots , X_{n_i}^{(i)})$ --- bound
variables;

\item $Y=(Y_1, \ldots , Y_{n_0})$ --- free variables;

\item $P$ --- Boolean combination of $k$ formulas
of the kind $f>0$ or $f=0$, and for every $f$ the degree $\deg (f)<d$.
\end{itemize}

Set $n= n_0+n_1+ \cdots + n_q$.

The algorithm begins with eliminating the most ``internal'' quantifier $Q_q$
in the formula
$$(Q_q X^{(q)}) P(Y, X^{(1)}, \ldots , X^{(q)}),$$
obtaining a Boolean combination $P'(Y, X^{(1)}, \ldots , X^{(q-1)})$, and then continues recursively,
eliminating quantifiers one by one, inside out.

Clearly, the complexity bound of this procedure is $(kd)^{n^{O(q)}}$

Using finer technical tools one can construct quantifier elimination
algorithm having the complexity
$$(kd)^{\prod_{0 \le i \le q}O(n_i)}.$$


\begin{thebibliography}{99}
\bibitem{BPR} S. Basu, R. Pollack, M.-F. Roy. Algorithms in real algebraic geometry, volume 10 of
Algorithms and Computation in Mathematics. Springer-Verlag, Berlin, 2006 (second edition).

\bibitem{BKR} M. Ben-Or, D. Kozen, and J. Reif. The complexity of elementary algebra and geometry. J.
of Computer and Systems Sciences, 18:251–264, 1986.

\bibitem{C} G. E. Collins. Quantifier elimination for real closed fields by cylindric algebraic decomposition.
In Second GI Conference on Automata Theory and Formal Languages, volume 33 of Lecture
Notes in Computer Science, pages 134–183, Berlin, 1975. Springer- Verlag.

\bibitem{DH} J. Davenport, J. Heints. Real quantifier elimination is doubly exponential. J.
Symbolic Comput., 5(1-2):29–35, 1988.

\bibitem{G} D. Grigoriev. Complexity of deciding Tarski algebra. J. Symbolic Comput., 5(1-2):65–108, 1988.

\bibitem{GV} D. Grigoriev, N. Vorobjov. Solving systems of polynomial inequalities in subexponential
time. J. Symbolic Comput., 5(1-2):37–64, 1988.

\bibitem{R} J. Renegar. On the computational complexity and geometry of the first-order theory of the
reals. I-III. J. Symbolic Comput., 13(3):255–352, 1992.

\bibitem{T} A. Tarski. A decision method for elementary algebra and geometry. University of California
Press, Berkeley and Los Angeles, Calif., 1951. 2nd ed.

\bibitem{W} H. R.W\"uthrich. Ein Entschiedungsverfahren f\"ur die Theorie der reell-abgeschlossenen K\"orper.
LNCS, 43:138–162, 1976.

\end{thebibliography}
\end{document}